\definecolor{darkblue}{rgb}{0.3,0.3,0.7}
\DeclareMathAlphabet{\eufrak}{U}{}{}{}  
\SetMathAlphabet\eufrak{normal}{U}{euf}{m}{n}
\SetMathAlphabet\eufrak{bold}{U}{euf}{b}{n}
\newtheorem{prop}{Proposition}[section]
\newtheorem{theorem}[prop]{Theorem}
\newtheorem{lemma}[prop]{Lemma}
\newtheorem{corollary}[prop]{Corollary}
\theoremstyle{definition}
\newtheorem{remark}[prop]{Remark}
\newtheorem{definition}[prop]{Definition}
\newtheorem{notation}[prop]{Notation}
\numberwithin{equation}{section}
\def\E{\mathbb{E}}
\def\P{\mathbb{P}}
\def\real{\mathbb{R}}
\def\F{\mathcal{F}}
\def\1{\textbf{1}}
\def\ind#1{\textbf{1}_{\{#1\}}}
\def\h{\hat{H}}
\newcommand{\eps}{\varepsilon}
\newcommand{\be}{\begin{equation}}
\newcommand{\ee}{\end{equation}}
\newcommand{\bde}{\begin{displaymath}}
\newcommand{\ede}{\end{displaymath}}
\newcommand{\beq}{\begin{eqnarray*}}
\newcommand{\eeq}{\end{eqnarray*}}
\newcommand{\beqa}{\begin{eqnarray}}
\newcommand{\eeqa}{\end{eqnarray}}
\newcommand{\bel }{\left\{\begin{array}{ll}}
\newcommand{\eel}{\cr \end{array} \right.}
\newcommand{\bex}{\begin{ex} \rm }
\newcommand{\eex}{\end{ex}}
{



\def\E{\mathbb E}
\def\F{{\cal F}}

\def\P{\mathbb P}



\def\cal#1{\mathcal{#1}}

\def\b{\textcolor{darkblue}}

\definecolor{ying}{rgb}{0.8, 0.0, 0.04}

\DeclareSymbolFontAlphabet{\mathrsfs}{rsfs}

\author{Caroline Hillairet\footnote{ENSAE  Paris, CREST UMR 9194,
5  avenue Henry Le Chatelier
91120 Palaiseau, France.  Email: \texttt{caroline.hillairet@ensae.fr}} \and Anthony R\'eveillac\footnote{INSA de Toulouse, IMT UMR CNRS 5219, Universit\'e de Toulouse, 135 avenue de Rangueil 31077 Toulouse Cedex 4 France. \; Email: \texttt{anthony.reveillac@insa-toulouse.fr}} \and Mathieu Rosenbaum\footnote{Ecole Polytechnique, CMAP UMR 7641,  Route de Saclay, 91120 Palaiseau, France. Email: \texttt{mathieu.rosenbaum@polytechnique.edu}}}

\title{An expansion formula for Hawkes processes and application to  cyber-insurance derivatives  \footnote{This research is supported by a grant of the French National Research Agency (ANR), “Investissements d’Avenir” (LabEx Ecodec/ANR-11-LABX-0047) and the Joint Research Initiative "Cyber Risk Insurance: actuarial modeling" with the partnership of AXA Research Fund. }}

\begin{document}

\maketitle
\allowdisplaybreaks

\begin{abstract}
\noindent
In this paper we provide an expansion formula for Hawkes processes which involves the addition of jumps at deterministic times to the Hawkes process in the spirit of the well-known integration by parts formula (or more precisely the Mecke formula) for Poisson functional. Our approach allows us to provide an expansion of the premium of a class of  cyber  insurance derivatives (such as reinsurance contracts including generalized Stop-Loss contracts) or risk management instruments (like Expected Shortfall) in terms of so-called shifted Hawkes processes. From the actuarial point of view, these processes can be seen as "stressed"  scenarios.  Our expansion formula for Hawkes processes enables us to provide lower and upper bounds on the premium (or the risk evaluation) of such  cyber  contracts and to quantify the surplus of premium compared to the standard modeling with a homogenous Poisson process.
\end{abstract}

\textbf{Keywords:} Hawkes process; Malliavin calculus; pricing formulae;   { cyber} insurance derivatives.

\section{Introduction}
{In actuarial science, the classical Cramer-Lundberg model used} to describe the surplus process of an insurance portfolio relies on the assumptions of the claims arrival being modeled by a Poisson process,  and of independence among claim sizes and between claim sizes and claim inter-occurrence times. However, in practice those assumptions are often too restrictive and there is a need for more general models.
 A first generalization in the modeling of claims arrivals consists in  using Cox processes (also known as doubly stochastic Poisson processes), in the context of ruin theory such as in Albrecher and Asmussen (2006) [2], or for pricing stop-loss catastrophe insurance contract and catastrophe insurance derivatives such as in Dassios and Jang (2003) [13] and (2013) [14], or Hillairet et al. (2018) \cite{HJR2018}.  \\
  {Besides, self-exciting effects have been highlighted in cyber risk,  in favor of modeling the claims arrivals by a Hawkes process,  that is  adapted to model aftershocks of cyber attacks.  Such processes have been recently used in the cyber security field, for instance by  Peng et al. (2016) \cite{PENG2016} who focused on extreme cyber attacks rates.   Baldwin et al.  (2017) also studied in  \cite{BALDWIN2017}   the threats to 10 important IP services, using industry standard SANS data, and they claim that Hawkes processes provides the adequate modeling of cyber attacks into information systems because they capture both shocks and persistence after shocks that may form attack contagion.    In cyber insurance, the  statistical analysis of Bessy et al. (2020) \cite{bessy2020multivariate}  on   the public Privacy Rights Clearinghouse database highlights 
the ability of Hawkes models to capture self-excitation and interactions of data-breaches.  {Although the application in this paper focuses on cyber risk,  this methodology  can  be applied to other risks presenting self-exciting properties, such as credit risk.}\\
 Hawkes processes, which  have wide applications in many fields (such as seismology, finance,  neuroscience or social networks), are also beginning to get studied in actuarial sciences. } Magnusson  Thesis (2015)  \cite{magnusson2015risk} is dedicated to  Hawkes processes with exponential decay and their   application to insurance. 
Dassios and Zhao (2012)  \cite{dassios2012ruin}  consider the ruin problem in a model where the claims arrivals follow a Hawkes process with decreasing exponential kernel.  Stabile and  Torrisi (2010) \cite{stabile2010risk}   study the asymptotic behavior of infinite and finite horizon ruin probabilities, assuming  non-stationary Hawkes claims arrivals and under light-tailed conditions on the claims. Gao and Zhu (2018) \cite{gao2018large} establish large deviations results for Hawkes processes with an exponential kernel and   develop approximations for finite-horizon ruin probabilities.  {Swishchuk (2018) \cite{swishchuk2018risk}  applies limit theorems for risk model based on general compound Hawkes process to compute premium principles and ruin times.}\\
Rather than giving explicit computations for probabilities of ruin, our paper proposes to give pricing formulae of insurance contracts, such as Stop-Loss contracts.  
Stop-Loss is a non-proportional type of reinsurance and works similarly to excess-of-loss reinsurance. While excess-of-loss is related to single loss amounts, either per risk or per event, stop-loss covers are related to the total amount of claims in a year. The reinsurer pays the part of the total loss that exceeds a certain amount $\underline{ K}$. The reinsurer's liability is often limited to a given threshold. Stop-loss reinsurance offers protection against an increase in either or both severity and frequency of a company's loss experience.
Various approximations of stop-loss reinsurance premiums are described in literature, some of them assuming certain dependence structure, such as Gerber (1982) \cite{gerber1982}, Albers (1999) \cite{albers1999},  
De Lourdes Centeno (2005) \cite{de2005dependent} or  Reijnen et al.  (2005) \cite{reijnen2005}.\\
Stop-loss contracts are the paradigm of reinsurance contracts, but we aim at dealing with more general payoffs (of maturity $T$),  whose valuation involves the computation of the quantity of the form 
$$\E[K_T h(L_T)] \mbox{  where  }  \left\lbrace
\begin{array}{l}
   \mbox{  $K_T$ is the effective loss covered by the reinsurance company, } \\\\
\mbox{  $L_T$ is the loss quantity that activates the contract.  } \end{array}
\right.$$
For example, for stop loss contracts  $h(L_T)= \ind{L_T \geq \underline{ K}}$. 
Similarly, this methodology  can  be applied to valuation of credit derivatives, such as Credit Default Obligation  tranches. It also goes beyond the analysis of pricing and finds application in the computation of the expected
shortfall of contingent claims : the expected shortfall is a useful risk measure, that takes into account the size of the expected loss above the value at risk. We refer to \cite{HJR2018} for  more details on those analogies.\\
Our paper considers   {cyber} insurance contracts, with underlying a cumulative loss indexed by a Hawkes process. 
We propose to compute an explicit closed form pricing formula. Although Monte Carlo procedures are certainly the most efficient to compute numerically the premium of such general contracts, the closed form expansion formula we develop  allows to compute lower and upper bounds for the premium,  and to quantify the surplus of premium compared to the  standard modeling with a homogenous Poisson process. {A correct estimate of this surplus of premium is a   crucial challenge for cyber insurance, to avoid an underestimation of the risk induced by a  Poisson process model. Such formula } could  also  be efficient  for sensitivity analysis.  The formula relies on  the
 so-called shifted Hawkes processes, which, from the actuarial point of view, can be seen as "stressed"  scenarios. \\

\noindent From the probabilistic point of view, the quantity $\E[K_T h(L_T)]$ can be expressed  (conditioning with respect to the claims) as $\E\left[\int_{(0,T]} Z_t dH_t F\right]$ where $Z$ is a predictable process and $F:=h(L_T)$ is a functional of the Hawkes process. In  the case where the counting process is a Poisson process (or a Cox process), Malliavin calculus enables one to transform this quantity. More precisely, to simplify the discussion, assume $H$ is an homogeneous Poisson process with intensity $\mu>0$ (in other words the self-exciting kernel $\Phi$ is put to $0$), the Malliavin integration by parts formula allows us to derive that\footnote{Note that strictly speaking this formula is not the Malliavin integration by parts formula on the Poisson space as the stochastic integral $\int_{(0,T]} Z_t dH_t$ is not exactly the divergence of $Z$, which explains why the dual operator on the right-hand side is not exactly the Malliavin difference operator. However, in case of predictable integrators $Z$, the classical integration by parts formula can be reduced to this form which is sufficient for our purpose.} : 
\begin{equation}
\label{eq:IntroIPPPoisson}
\E\left[\int_{(0,T]} Z_t dH_t F\right] = \mu \int_0^T \E\left[Z_t F\circ \eps_t^+\right] dt,
\end{equation}
where the notation $F\circ \eps_t^+$ denotes the functional on the Poisson space where a deterministic jump is added to the paths of $H$ at time $t$. This expression turns out to be particularly interesting from an actuarial point of view since adding a jump at some time $t$ corresponds to realising a stress test by adding artificially a claim at time $t$. This approach has been followed in \cite{HJR2018} for Cox processes (that is doubly stochastic Poisson processes with stochastic but independent intensity). Naturally, in case of a Poisson process, the additional jump at some time $t$ only impacts the payoff of the contract by adding a new claim in the contract but it does not impact the dynamic of the counting process $H$.\\\\
\noindent
The goal of this paper is two-fold: \\\\
1. First we provide in  Theorem \ref{th:mainIPP} a generalization of Equation (\ref{eq:IntroIPPPoisson}) in case $H$ is a Hawkes process. The main ingredient consists in using a representation of a Hawkes process known as the "Poisson embedding" (related to the "Thinning Algorithm") in terms of a Poisson process $N$ on $[0,T]\times\real_+$ to which the Malliavin integration by parts formula can be applied. As the adjunction of a jump at a given time impacts the dynamic of the Hawkes process, we refer to the obtained expression more to an "expansion" rather than an "integration by parts formula" for the Hawkes process, as it involves what we name "shifted Hawkes processes" for which jumps at deterministic times are added to the process accordingly to the self-exciting kernel $\Phi$. We refer to Theorem \ref{th:mainIPP} and to Remark \ref{rem:postmain} for a discussion on this expansion and its link to the one obtained for homogeneous Poisson process.\\\\
\noindent
2. Then, we apply our main result to the specific quantity $\E[K_T h(L_T)]$ which is at the core for determining the premium of a large class of insurance derivatives or risk management instruments. Our main result on that regard is given in Theorem \ref{th:main_I}. As pointed out in the discussion at the beginning of Section \ref{section:LowerUpper}, the shifted processes $H^{v_n,\ldots,v_1}$ (see Definition \ref{definition:multiShift} for a precise statement) appearing  in the form of the premium are of the same complexity than the original Hawkes process $H$. However, they exhibit deterministic jumps at some times $v_1,\ldots,v_n$ which are weighted by correlation factors of the form $\Phi(v_i-v_{i-1})$. In other words, this formula make appears $n$-jumps of the Hawkes process at some deterministic times. This provides an additional input compared to classical estimates for Hawkes processes for obtaining lower or upper bounds of their CDF in terms of the one of a Poisson process for instance. We benefit from this formulation to derive in Proposition \ref{prop:lowerboundBest} and Proposition \ref{prop:upperbound} a lower and an upper bound respectively for the quantity $\E[K_T h(L_T)]$.\\\\ 
\noindent
We proceed as follows. In the next section, we provide general notations and elements of Malliavin calculus on the (classical) Poisson space. In particular, the shift operators (which will play a central role in our analysis) on the Poisson space are introduced. We also  explain the representation of a Hawkes process using the Poisson embedding.  Section \ref{section:Hawkes} provides the derivation of the expansion formula in Theorem \ref{th:mainIPP}. Note that it requires the introduction  and analysis of what we named the shifted Hawkes processes resulting from the shifts on the Poisson space of the original Hawkes process (this material is presented in Section \ref{section:shifted}). Insurance contracts of  interest are presented in Section \ref{section:insurance} together with the main result for the representation of the premium of such contracts in Theorem \ref{th:main_I}. Lower and upper bounds for this premium are presented in Propostion \ref{prop:lowerboundBest} and Proposition \ref{prop:upperbound}, and in Corollaries \ref{col:lowerboundBest} and \ref{cor:upperbound}. Finally, we postponed some technical material in Section \ref{section:tec}.

\section{Elements of stochastic analysis on the Poisson space, Hawkes process and thinning}\label{sec:Poisson2}

This short section  provides  some generalities and elements of stochastic analysis on the Poisson space, and in particular the integration by parts formula for the Poisson process.  Hawkes processes  are also defined and  their representation  through the thinning procedure is presented.\\
Throughout this paper $T>0$ denotes a fixed positive real number. For $X$ a topological space, we set $\mathcal B(X)$ the $\sigma$-algebra of Borelian sets. 

\subsection{Elements of stochastic analysis on the Poisson space}
\label{sec:Poisson}
Let the space of configurations
$$ \Omega^N:=\left\{\omega^N=\sum_{i=1}^{n} \delta_{t_{i},\theta_i}, \; i=1,\ldots,n,\; 0=t_0 < t_1 < \cdots < t_n \leq T, \; \theta_i \in \mathbb{R}_+, \; n\in \mathbb{N}\cap\{+\infty\} \right\}.$$
Each path of a counting process is represented as an element $\omega^N$ in $\Omega^N$ which is a $\mathbb N$-valued measure on $[0,T]\times \mathbb{R}_+$. Let $\mathcal F_T^N$ be the $\sigma$-field associated to the vague topology on $\Omega^N$, and $\P^N$ the Poisson measure on $ \Omega^N$ under which the counting process $N$ defined as  \b{the} canonical process on $ \Omega^N$ as
$$ (N(\omega))([0,t]\times[0,b])(\omega):=\omega([0,t]\times[0,b]), \quad t \in [0,T], \; b \in \mathbb{R}_+,$$
is an homogeneous Poisson process with intensity one (so that $N([0,t]\times[0,b])$ is a Poisson random variable with intensity $ b t$ for any $(t,b) \in [0,T]\times\mathbb{R}_+$). We set $\mathbb F^N:=(\F_t^N)_{t\in [0,T]}$ the natural filtration of $N$, that is $\mathcal{F}_t^N:=\sigma(N( \mathcal T  \times B), \; \mathcal T \subset \mathcal{B}([0,t]), \; B \in \mathcal{B}(\real_+))$. The expectation with respect to $\P^N$ is denoted by $\E[\cdot]$.\\\\
\noindent
One of the main ingredient in our approach will be the integration by parts formula for the Poisson process $N$ and the shift operators defined below.

\begin{definition}[Shift operator]
\label{definition:shift}
We define for $(t,\theta)$ in $[0,T]\times\mathbb{R}_+$ the measurable map  
$$ 
\begin{array}{lll}
\eps_{(t,\theta)}^+ : &\Omega^N &\to \Omega^N\\
&\omega &\mapsto  \eps_{(t,\theta)}^+(\omega),
\end{array}
$$
with $(\eps_{(t,\theta)}^+(\omega))(A) := \omega(A \setminus {(t,\theta)}) + \textbf{1}_{A}(t,\theta), \; A \in \mathcal{B}([0,T]\times \mathbb{R}_+)$ and 
where $$ \textbf{1}_{A}(t,\theta):=\left\lbrace \begin{array}{l} 1, \quad \textrm{if } (t,\theta)\in A,\\0, \quad \textrm{else.}\end{array}\right. $$
\end{definition}

\begin{remark}
\label{rem:Nshift}
Let $(t_0,\theta_0)$ in $(0,T)\times \real_+$, $t_0 < s < t$, $\mathcal T \in \{(s,t), (s,t], [s,t), [s,t]\}$ and $B$ in $\mathcal B(\real_+)$. We have that 
$$ N\circ \eps_{(t_0,\theta_0)}^+ (\mathcal T\times B) = \eps_{(t_0,\theta_0)}^+ (\mathcal T\times B)=N (\mathcal T\times B), \quad \P-a.s..$$
\end{remark}

\begin{lemma}
\label{lemma:mesur}
Let $t$ in $[0,T]$ and $F$ be an $\mathcal F_t^N$-measurable random variable. Let $v > t$ and $\theta_0\geq 0$. It holds that 
$$ F\circ\eps_{(v,\theta_0)}^+ = F, \quad \P-a.s.. $$
\end{lemma}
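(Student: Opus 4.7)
The plan is to reduce the lemma to the statement of Remark \ref{rem:Nshift} (or equivalently to a direct computation on the generators of $\mathcal F_t^N$) and then to upgrade it from the generating $\pi$-system to the full $\sigma$-algebra via a monotone class argument.

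First I would check the claim on the generating family. Recall that $\mathcal F_t^N$ is generated by the random variables of the form $N(\mathcal T\times B)$ with $\mathcal T\in\mathcal B([0,t])$ and $B\in\mathcal B(\real_+)$. Fix such a pair $(\mathcal T,B)$ and $v>t$, $\theta_0\geq 0$. By the very definition of $\eps_{(v,\theta_0)}^+$,
\begin{equation*}
N(\mathcal T\times B)\circ\eps_{(v,\theta_0)}^+(\omega)
=\bigl(\eps_{(v,\theta_0)}^+(\omega)\bigr)(\mathcal T\times B)
=\omega\bigl((\mathcal T\times B)\setminus\{(v,\theta_0)\}\bigr)+\mathbf 1_{\mathcal T\times B}(v,\theta_0).
\end{equation*}
Since $v>t$ and $\mathcal T\subset[0,t]$, the point $(v,\theta_0)$ does not belong to $\mathcal T\times B$. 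Hence $\mathbf 1_{\mathcal T\times B}(v,\theta_0)=0$ and $(\mathcal T\times B)\setminus\{(v,\theta_0)\}=\mathcal T\times B$, so that $N(\mathcal T\times B)\circ\eps_{(v,\theta_0)}^+=N(\mathcal T\times B)$ pointwise (in particular $\P$-a.s.). This is exactly the content of Remark \ref{rem:Nshift}, restricted to the subsets of $[0,t]$.

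Next I would extend the identity to arbitrary $\mathcal F_t^N$-measurable $F$ by a functional monotone class argument. Consider the class
\begin{equation*}
\mathcal H:=\bigl\{F:\Omega^N\to\R \text{ bounded, } \mathcal F_t^N\text{-measurable, and }F\circ\eps_{(v,\theta_0)}^+=F\ \P\text{-a.s.}\bigr\}.
\end{equation*}
By the previous step, $\mathcal H$ contains all finite products of the generators $N(\mathcal T\times B)$ with $\mathcal T\subset[0,t]$, hence all polynomials in such quantities, hence all bounded continuous functions of finitely many of them. It is clear that $\mathcal H$ is a vector space containing constants and that it is stable under bounded monotone convergence (the shift $\eps_{(v,\theta_0)}^+$ is a fixed measurable map, so limits commute with composition). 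The functional monotone class theorem then yields that $\mathcal H$ contains every bounded $\mathcal F_t^N$-measurable random variable, and a truncation argument handles the unbounded case.

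I do not expect a serious obstacle: everything is driven by the trivial fact that shifting a configuration at a time $v>t$ cannot modify its restriction to $[0,t]\times\R_+$. The only point requiring a little care is to notice that the definition of $\mathcal F_t^N$ involves only Borel subsets of $[0,t]$, so that $(v,\theta_0)$ automatically avoids the generating rectangles; this is what makes the monotone class step go through without the need to track $\P$-null sets beyond the straightforward ``$\P$-a.s.'' in the statement.
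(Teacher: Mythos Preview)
Your proposal is correct and is essentially a fleshed-out version of the paper's own proof, which consists of the single sentence ``a $\mathcal F_t^N$-measurable random variable is a functional of $N_{\cdot\wedge t}$''. You have simply supplied the monotone class details behind that observation; there is no substantive difference in approach.
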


\begin{proof}
The proof consists in noticing that a $\mathcal F_t^N$-measurable random variable is a functional of $N_{\cdot \wedge t}$. 
\end{proof}
\noindent Similarly, for  any $\omega$ in $\Omega^N$ and $(t,\theta)$ in $[0,T]\times\real_+$, we set the measure  $\eps_{(t,\theta)}^-(\omega)$  defined as
$$(\eps_{(t,\theta)}^-(\omega))(A):=\omega(A\setminus \{(t,\theta)\}), \quad A \in \mathcal B([0,T]\times \real_+).$$
\noindent We conclude this section with the integration by parts formula (ore more specifically the Mecke formula) on the Poisson space (see \cite[Corollaire 5]{Picard_French_96} or \cite{Nualart_Vives_90}).
\begin{prop}[Mecke's Formula]
\label{prop:IPP}
Let $F$ be in $L^1(\Omega,\mathcal F_T^N,\P)$ and $\mathcal Z=(\mathcal Z(t,\theta))_{t\in [0,T], \theta \in \mathbb R_+}$ be a $\mathbb F^N$-adapted process\footnote{Note that only measurability with respect to $\mathcal F^N_T$ is necessary here.} with $\E\left[\int_0^T |\mathcal Z(t,\theta)| dt\right]<+\infty$ and such that 
\begin{equation}
\label{eq:Prev-presque}
\mathcal Z(t,\theta) \circ \eps_{(t,\theta)}^- = \mathcal Z(t,\theta), \quad \P\otimes dt\otimes d\theta, a.e..
\end{equation}
We have that 
$$ \E\left[F \int_{[0,T]\times \real_+} \mathcal Z(t,\theta)N(dt,d\theta)\right] = \E\left[\int_0^T \int_{\real_+} \mathcal Z(t,\theta) (F \circ \eps_{(t,\theta)}^+) dt d\theta\right].$$
\end{prop}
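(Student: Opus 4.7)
The plan is to derive the formula from the standard Mecke/Campbell identity for the underlying Poisson point process $N$ on $[0,T]\times\real_+$ (which carries Lebesgue intensity $dt\otimes d\theta$), and then to use condition (\ref{eq:Prev-presque}) to remove the shift from $\mathcal Z$ on the right-hand side.

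First, I would apply the general Mecke identity of \cite{Picard_French_96,Nualart_Vives_90} to the joint integrand $g(t,\theta,\omega):=\mathcal Z(t,\theta)(\omega)\,F(\omega)$ (splitting into positive and negative parts and using the two integrability hypotheses), which yields
$$
\E\left[F \int_{[0,T]\times\real_+} \mathcal Z(t,\theta)\,N(dt,d\theta)\right] = \E\left[\int_0^T\!\!\int_{\real_+} (\mathcal Z(t,\theta)\circ \eps_{(t,\theta)}^+)(F\circ \eps_{(t,\theta)}^+)\,dt\,d\theta\right].
$$
This step is standard: the cited references prove it by first checking the identity on cylindrical functionals $F=\prod_i f_i(N(A_i))$ paired with simple integrands $\mathcal Z$, where the explicit law of $N$ on finite configurations makes both sides computable, before extending by monotone class and dominated convergence (after a truncation in $\theta$ to a bounded interval $[0,M]$).

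The second step is to show that condition (\ref{eq:Prev-presque}) forces $\mathcal Z(t,\theta)\circ\eps_{(t,\theta)}^+ = \mathcal Z(t,\theta)$ for $\P\otimes dt\otimes d\theta$-a.e.\ $(\omega,t,\theta)$. A direct inspection of Definition \ref{definition:shift} and of the formula for $\eps_{(t,\theta)}^-$ yields the pointwise identity $\eps_{(t,\theta)}^-\circ\eps_{(t,\theta)}^+=\eps_{(t,\theta)}^-$ on $\Omega^N$, because both operators remove any existing atom at $(t,\theta)$ and add none. Substituting $\omega\mapsto\eps_{(t,\theta)}^+(\omega)$ in (\ref{eq:Prev-presque}) therefore gives
$$
\mathcal Z(t,\theta)\circ\eps_{(t,\theta)}^+ \;=\; \mathcal Z(t,\theta)\circ\eps_{(t,\theta)}^-\circ\eps_{(t,\theta)}^+ \;=\; \mathcal Z(t,\theta)\circ\eps_{(t,\theta)}^-,
$$
while (\ref{eq:Prev-presque}) itself identifies the last quantity with $\mathcal Z(t,\theta)$ on a set of full product measure. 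Plugging this back into the previous display yields the announced formula.

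The main subtlety to handle is the choice of version of $\mathcal Z$: since $\eps_{(t,\theta)}^+(\omega)$ always carries the deterministic atom $(t,\theta)$, it lives on a $\P$-negligible subset of $\Omega^N$, and $\mathcal Z(t,\theta)\circ\eps_{(t,\theta)}^+$ is sensitive to the pointwise representative rather than to the $\P$-equivalence class of $\mathcal Z(t,\theta)$. Condition (\ref{eq:Prev-presque}) is precisely the statement that a representative can be selected for which the value of $\mathcal Z(t,\theta)$ does not depend on the presence of an atom at $(t,\theta)$, and the $\mathbb F^N$-adaptedness of $\mathcal Z$ guarantees that such a version exists (as is the case in particular for predictable integrands, which are the main application).
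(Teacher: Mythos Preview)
The paper does not give its own proof of this proposition: it is stated with a direct reference to \cite[Corollaire 5]{Picard_French_96} and \cite{Nualart_Vives_90}, and is used as a black box thereafter. Your proposal is therefore not competing with a paper argument but rather sketching how the cited result specializes to the present setting, and in that respect it is sound. The two-step strategy --- apply the general Mecke identity to the product integrand $\mathcal Z(t,\theta)F$, then remove the shift from $\mathcal Z$ via condition (\ref{eq:Prev-presque}) and the algebraic identity $\eps_{(t,\theta)}^-\circ\eps_{(t,\theta)}^+=\eps_{(t,\theta)}^-$ --- is exactly the standard route.

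One minor correction of wording: you write that ``both operators remove any existing atom at $(t,\theta)$ and add none'', but $\eps_{(t,\theta)}^+$ does add an atom; what makes the composition identity work is that $\eps_{(t,\theta)}^-$ then removes it. Your final paragraph on versions is the right caveat: since $\eps_{(t,\theta)}^-(\omega)=\omega$ for $\P\otimes dt\otimes d\theta$-a.e.\ $(\omega,t,\theta)$, condition (\ref{eq:Prev-presque}) is only meaningful as a pointwise (version-level) constraint, and it is precisely this reading that lets you evaluate it along the $\P$-negligible configurations $\eps_{(t,\theta)}^+(\omega)$. For the predictable integrands used in the paper (e.g.\ $\mathcal Z(t,\theta)=Z_t\textbf{1}_{\{\theta\le\Lambda_t\}}$) this holds everywhere by Lemma \ref{lemma:mesur}, so the subtlety does not bite in practice.
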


\noindent 
For the expansion formula in Theorem \ref{th:mainIPP}, the Mecke's formula will be applied  for the process  $\mathcal Z(t,\theta) = Z_t  \textbf{1}_{\{\theta \leq \Lambda_t\}}$ where $\Lambda$ will denote the intensity of the Hawkes process (we refer to the proof of Theorem \ref{th:mainIPP} for a more precise relation between this formula and our result).

 \subsection{Representation of Hawkes processes}

We first recall the definition of a Hawkes process. 

\begin{definition}[Standard Hawkes process, \cite{Hawkes}]
\label{def:standardHawkes}
Let $(\Omega,\mathcal F_T,\P,\mathbb F:=(\mathcal F_t)_{t\in [0,T]})$  be a filtered probability space, $\mu>0$ and $\Phi:[0,T] \to \real_+$ be a bounded non-negative map with $\|\Phi\|_1 <1$. A standard Hawkes process $H:=(H_t)_{t\in [0,T]}$ with parameters $\mu$ and $\Phi$ is a counting process such that   
\begin{itemize}
\item[(i)] $H_0=0,\quad \P-a.s.$,
\item[(ii)] its ($\mathbb{F}$-predictable) intensity process is given by
$$\Lambda_t:=\mu + \int_{(0,t)} \Phi(t-s) dH_s, \quad t\in [0,T],$$
that is for any $0\leq s \leq t \leq T$ and $A \in \mathcal{F}_s$,
$$ \E\left[\textbf{1}_A (H_t-H_s) \right] = \E\left[\int_{(s,t]} \textbf{1}_A \Lambda_r dr \right].$$
\end{itemize}
\end{definition}
\noindent This definition can be generalized as follows,  by considering a starting date $v>0$ and allowing starting points  (for the Hawkes process itself and its intensity) that are $\mathcal F_v$-measurable (that is that are known at time $v$).

\begin{definition}[Generalized Hawkes process]
\label{def:Hawkes}
Let $(\Omega,\mathcal F_T,\P,\mathbb F:=(\mathcal F_t)_{t\in [0,T]})$ be a filtered probability space. Let $v$ in $[0,T]$, $h^v$ be a $\mathcal F_v$-measurable random variable with valued in $\mathbb N$, $\mu^v:=(\mu^v(t))_{t\in [v,T]}$ a positive map such that $\mu^v(t)$ is $\mathcal F_v$-measurable for any $t\geq v$, and $\Phi:[0,T] \to \real_+$ be a bounded non-negative map with $\|\Phi\|_1 <1$. A Hawkes process on $[v,T]$ with parameters $\mu^v$, $h^v$ and $\Phi:[0,T] \to \real_+$ is a ($\mathbb{F}$-adapted) counting process $H:=(H_t)_{t\in[v,T]}$ such that 
\begin{itemize}
\item[(i)] $H_v=h^v,\quad \P-a.s.$,
\item[(ii)] its ($\mathbb{F}$-predictable) intensity process is given by
$$\Lambda_t:=\mu^v(t) + \int_{(v,t)} \Phi(t-s) dH_s, \quad t\in [v,T],$$
that is for any $v\leq s \leq t \leq T$ and $A \in \mathcal{F}_s$,
$$ \E\left[\textbf{1}_A (H_t-H_s) \vert \mathcal F_v\right] = \E\left[\int_{(s,t]} \textbf{1}_A \Lambda_r dr \Big\vert \mathcal F_v \right].$$
\end{itemize}
\end{definition}
\noindent  Our main result relies on the following representation of a Hawkes process known as the "Poisson embedding" and related to the "Thinning Algorithm" (see \textit{e.g.} \cite{Bremaud_Massoulie,Costa_etal,Daley_VereJones,Ogata} and references therein). To this end we consider the filtered probability space $(\Omega,\mathcal F_T,\P,\mathbb F)$ as follows 
$$ \Omega:=\Omega^N, \; \mathcal F_T:=\mathcal F_T^N, \quad \mathbb F:=(\mathcal F_t)_{t\in [0,T]}, \; \mathcal F_t:=\mathcal F_t^N, \; t \in [0,T], \quad \P:=\P^N. $$

\begin{theorem}
\label{th:wellposedSDE}
Let $v$ in $[0,T]$ and $(\mu^v(t))_{t\in[v,T]}$ be a non-negative stochastic process such that for any $t\geq v$, $\mu^v(t)$ is a $\mathcal{F}_v^N$-measurable random variable. 
Let in addition $h^v$ be a $\mathcal{F}_v^N$-measurable random with values in $\mathbb{N}$. On the probability space $(\Omega,\mathcal F_T,\P)$, the SDE (\ref{eq:H}) with
\begin{equation}
\label{eq:Hv}
\left\lbrace
\begin{array}{l}
\h^v_t = h^v + \int_{(v,t]} \int_{\real_+} \textbf{1}_{\{\theta \leq \hat\Lambda_s^v\}} N(ds,d\theta),\quad t\in [v,T] \\\\
\hat\Lambda_t^v = \mu^v(t) + \int_{(v,t)} \Phi(t-u) d\hat H^v_u 
\end{array}
\right.
\end{equation}
admits a unique $\mathbb F^N$-adapted solution $\h$. Uniqueness is understood in the strong sense, that is, if $\h^1$, $\h^2$ denote two solutions then 
$$ \P\left[\sup_{t\in [0,T]} |\h_t^1-\h_t^2| \neq 0\right]=0.$$
\end{theorem}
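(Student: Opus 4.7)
The plan is to construct $\hat{H}^v$ pathwise, jump by jump, using the Poisson embedding structure already present in~\eqref{eq:Hv}. For a fixed $\omega \in \Omega$ the jumps of $\hat H^v$ should be precisely those atoms of $N(\omega)$ lying below the intensity curve $\theta = \hat{\Lambda}^v_t$. The advantage of this viewpoint is that, between two consecutive jumps, $\hat\Lambda^v_t$ is a fully explicit function of $\mu^v(\cdot)$, $\Phi$ and the past jump times, so no fixed-point argument is needed to extend the process between jumps.

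For existence I would set $T_0 := v$ and, assuming $T_0 < T_1 < \cdots < T_k$ have been defined together with $\hat{H}^v_{T_i} = h^v + i$, put on $(T_k, T]$ the candidate intensity $\hat{\Lambda}^v_t := \mu^v(t) + \sum_{i=1}^k \Phi(t - T_i)$ and declare
\[
T_{k+1} := \inf\bigl\{ t > T_k \, : \, N(\{t\} \times [0,\hat{\Lambda}^v_t]) = 1 \bigr\},
\]
with the convention $\inf \emptyset = +\infty$. Since $\mu^v$ and $\Phi$ are bounded, $T_{k+1} > T_k$ a.s., and setting $\hat{H}^v_t := h^v + \sum_{i \geq 1} \textbf{1}_{\{T_i \leq t\}}$ produces an $\mathbb{F}^N$-adapted counting process solving~\eqref{eq:Hv} up to the explosion time $T_\infty := \lim_k T_k$. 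Measurability in $(\omega,t)$ follows from the measurable selection of each $T_{k+1}$ as a stopping time of $\mathbb F^N$.

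The crucial step is non-explosion: $T_\infty > T$ almost surely. Writing $m(t) := \mathbb E[\hat{H}^v_{t \wedge T_\infty} - h^v]$ and compensating the Poisson integral in~\eqref{eq:Hv} yields, after Fubini,
\[
m(t) \leq \int_v^t \mathbb{E}[\mu^v(s)]\, ds + \int_v^t \Phi(t-u)\, dm(u), \qquad t \in [v, T].
\]
Under the standing assumption that $\|\Phi\|_1 < 1$ and $\Phi$ is bounded, a Gronwall-type argument on this renewal inequality gives $m(T) < +\infty$, hence $\P[T_\infty \leq T] = 0$. This is the only place where subcriticality of $\Phi$ is needed.

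For uniqueness, if $\hat{H}^1$ and $\hat{H}^2$ are two $\mathbb{F}^N$-adapted strong solutions of~\eqref{eq:Hv}, then each of their jump times must be an atom of the same Poisson measure $N$ lying below its own intensity curve. An induction on the ordered sequence of jump times shows that if $\hat{H}^1 \equiv \hat{H}^2$ on $[v, T_k]$ then the corresponding intensities coincide on $(T_k, T_{k+1}]$, forcing the next jump time and the values of the processes to coincide; pathwise uniqueness on $[v,T]$ follows. The main obstacle I expect is the non-explosion estimate, since the intensity depends recursively on past jumps: the integral inequality above must be derived carefully by truncating at $T_\infty \wedge T$ to avoid circularity before invoking Gronwall, and this is precisely the place where $\|\Phi\|_1 < 1$ is unavoidable, as otherwise the underlying cluster branching would be supercritical.
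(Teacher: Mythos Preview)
Your construction is essentially the same as the paper's: both build the solution pathwise via thinning, defining the jump times $T_k$ (the paper writes $\tau_k^{\hat H}$) recursively as $\mathbb F^N$-stopping times, with the intensity explicit between jumps; and both prove uniqueness by an induction showing two solutions share the same first jump, hence the same intensity up to it, hence the same second jump, and so on.

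The one point of divergence is non-explosion. The paper does \emph{not} use a renewal/Gronwall estimate: it argues in one line that $\tau_i^{\hat H}\geq \tau_i^N$ (each jump of $\hat H$ is an atom of $N$), hence $\lim_i \tau_i^{\hat H}=T$ a.s. Your moment inequality is a legitimate alternative, but two remarks are in order. First, the displayed inequality should read $m(t)\le \int_v^t \mathbb E[\mu^v(s)]\,ds + \int_v^t\!\int_{(v,s)}\Phi(s-u)\,dm(u)\,ds$, which after Fubini gives $(1-\|\Phi\|_1)\,m(t)\le \int_v^t \mathbb E[\mu^v(s)]\,ds$; the version you wrote (with $\Phi(t-u)$) is not what comes out of compensating \eqref{eq:Hv}. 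Second, and more importantly, your claim that $\|\Phi\|_1<1$ is ``unavoidable'' for non-explosion is not correct on a fixed finite horizon: boundedness of $\Phi$ suffices, since after $k$ jumps the intensity is at most $\sup_s\mu^v(s)+k\|\Phi\|_\infty$ and $\sum_{k\ge 0}(a+kb)^{-1}=+\infty$. In the paper, subcriticality $\|\Phi\|_1<1$ is used for moment bounds on $\Lambda$ and for convergence of the expansion (Theorem~\ref{th:mainIPP}), not for well-posedness of \eqref{eq:Hv}. Also note that the theorem does not assume $\mu^v$ is bounded, so your argument implicitly adds a hypothesis (harmless for the applications in the paper, where $\mu^v$ is a finite sum of bounded terms, but worth flagging).
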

\noindent This result can be deduced from the general construction of point process of Jacod et al \cite{jacod1994}. Nevertheless for seek of completeness, a  direct  proof  for Hawkes process is postponed to Section \ref{section:proofEDS}.

\begin{corollary}\label{constructionHawkes}
Let $\mu \in \mathbb R_+$. We consider $(H,\Lambda)$ the unique solution to SDE 
\begin{equation}
\label{eq:H}
\left\lbrace
\begin{array}{l}
H_t = \int_{(0,t]} \int_{\real_+} \textbf{1}_{\{\theta \leq \Lambda_s\}} N(ds,d\theta),\quad t\in [0,T] \\\\
\Lambda_t = \mu + \int_{(0,t)} \Phi(t-u) d H_u.
\end{array}
\right.
\end{equation}
We set $\mathbb F^H:=(\mathcal F_t^H)_{t\in [0,T]}$ the natural filtration of $H$ (obviously $\mathcal F_t^H \subset \mathcal F_t^N$).  Then $H$ is a standard Hawkes process. 
\end{corollary}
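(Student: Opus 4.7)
The plan is to verify directly the two items in Definition \ref{def:standardHawkes}, with respect to the natural filtration $\mathbb{F}^H$. Property (i) is immediate since the integral defining $H_t$ is over $(0,t]$, so $H_0 = 0$ $\P$-a.s. For property (ii), the first observation is that $\Lambda_t$ is, by construction, a deterministic functional (through the kernel $\Phi$) of the sample path of $H$ restricted to $[0,t)$, so $\Lambda$ is automatically $\mathbb{F}^H$-predictable; what remains is the compensator identity.

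The heart of the matter is to identify $\int_0^\cdot \Lambda_s\, ds$ as the $\mathbb{F}^H$-compensator of $H$. On the larger filtration $\mathbb{F}^N$, the random measure $\widetilde N(ds, d\theta) := N(ds, d\theta) - ds\, d\theta$ is a martingale measure; the integrand $(s,\theta) \mapsto \textbf{1}_{\{\theta \leq \Lambda_s\}}$ is $\mathbb{F}^N$-predictable, so
\begin{equation*}
M_t := H_t - \int_0^t \Lambda_s\, ds = \int_{(0,t]} \int_{\mathbb{R}_+} \textbf{1}_{\{\theta \leq \Lambda_s\}}\, \widetilde N(ds, d\theta)
\end{equation*}
is an $\mathbb{F}^N$-local martingale. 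Since $\mathcal{F}_s^H \subset \mathcal{F}_s^N$, for any $A \in \mathcal{F}_s^H$ we would then obtain $\E[\textbf{1}_A(M_t - M_s)] = 0$, which, after a Fubini rearrangement, is exactly the identity required in (ii).

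The main obstacle is upgrading $M$ from a local to a genuine martingale, which reduces to proving $\E[H_T] = \E\bigl[\int_0^T \Lambda_s\, ds\bigr] < \infty$. I would localize along $\tau_n := \inf\{t \geq 0 : H_t \geq n\} \wedge T$, so that the stopped process is bounded and $M^{\tau_n}$ is a true martingale. Taking expectations in the stopped SDE and applying Fubini yields a renewal-type inequality
\begin{equation*}
\E[H_{t \wedge \tau_n}] \leq \mu\, t + \int_0^t \Phi(t-u)\, \E[H_{u \wedge \tau_n}]\, du,
\end{equation*}
and iterating this inequality while exploiting the subcriticality assumption $\|\Phi\|_1 < 1$ gives a bound on $\E[H_{t \wedge \tau_n}]$ that is uniform in $n$. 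Monotone convergence as $n \to \infty$ then produces $\E[H_T] < \infty$.

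With integrability secured, $M$ is a true $\mathbb{F}^N$-martingale, hence the identity $\E[\textbf{1}_A (H_t - H_s)] = \E\bigl[\int_{(s,t]} \textbf{1}_A \Lambda_r\, dr\bigr]$ holds for every $A \in \mathcal{F}_s^H$. Combined with $H_0 = 0$ and the $\mathbb{F}^H$-predictability of $\Lambda$, this identifies $H$ as a standard Hawkes process with parameters $\mu$ and $\Phi$ in the sense of Definition \ref{def:standardHawkes}.
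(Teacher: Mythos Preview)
Your argument is correct, but it differs from the paper's proof in a meaningful way. The paper does not go through the compensated Poisson martingale $\widetilde N$ at all; instead it applies Mecke's formula (Proposition~\ref{prop:IPP}) directly with $F=\textbf{1}_A$ and $\mathcal Z(u,\theta)=\textbf{1}_{\{\theta\leq\Lambda_u\}}$, using Lemma~\ref{lemma:mesur} to justify both that $\mathcal Z$ satisfies \eqref{eq:Prev-presque} and that $\textbf{1}_A\circ\eps_{(u,\theta)}^+=\textbf{1}_A$ for $u>s$. This yields the compensator identity in a single line, with no localization and no renewal estimate: since all integrands are non-negative, Mecke's formula holds without any a~priori finiteness, so the subcriticality hypothesis $\|\Phi\|_1<1$ is never invoked in the paper's proof. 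Your route is the classical compensator identification via the $\mathbb F^N$-martingale structure of $\widetilde N$, and its cost is exactly the integrability step you correctly isolate; the payoff is that you get $\E[H_T]<\infty$ for free, and you avoid relying on the shift machinery. The paper's route is shorter here and also serves as a warm-up for the same Mecke-based manipulation that drives Theorem~\ref{th:mainIPP}.
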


\begin{proof}
Let $0\leq s \leq t \leq T$ and $A$ in $\mathcal F_s^H$.  Using  Lemma \ref{lemma:mesur},  $\mathcal Z(u,\theta)=\textbf{1}_{\{\theta \leq \Lambda_u\}}$  satisfies Relation (\ref{eq:Prev-presque})  (as $\Lambda$ is predictable), one can thus apply Proposition \ref{prop:IPP} (Mecke's Formula) for  $\mathcal Z(u,\theta)=\textbf{1}_{\{\theta \leq \Lambda_u\}}$.
Then, we have (by conditioning with respect to $\mathcal F_s^H$) 
\begin{align*}
&\E\left[1_{A_s} (H_t-H_s)\right]\\
&=\E\left[1_{A_s} \int_{(s,t]} dH_u\right]\\
&=\E\left[1_{A_s} \int_{(s,t]} \int_{\real_+} \textbf{1}_{\{\theta \leq \Lambda_u\}} N(du,d\theta) \right]\\
&=\E\left[1_{A_s} \int_{(s,t]} \int_{\real_+} \textbf{1}_{\{\theta \leq \Lambda_u\}} du d\theta \right]\\
&=\E\left[1_{A_s} \int_{(s,t]} \Lambda_u du\right],
\end{align*}
where we have used again  Lemma \ref{lemma:mesur} to prove that  $1_{A_s}\circ \eps_{(u,\theta)}^+=1_{A_s}$ for $u>s$.
\end{proof}

\section{An expansion formula for Hawkes processes}
\label{section:Hawkes}
Thanks to the previous representation of a Hawkes process using the Poisson embedding,   we derive in this section an expansion formula. The main result is stated in Theorem \ref{th:mainIPP}. It requires an accurate definition and analysis of what we named the shifted Hawkes processes resulting from the shifts on the Poisson space of the original Hawkes process. 
\subsection{The shifted Hawkes processes}
\label{section:shifted}

We now introduce shifted Hawkes processes that is the effect of the shift operators $\eps^+$ on the Hawkes process. As we will see the resulting Hawkes process can also be described in terms of a Poisson SDE.

\begin{definition}[One shift Hawkes process]
\label{definition:OneShift}
 Let $H$ be a standard Hawkes process with initial intensity $\mu >0$ and  bounded excitation function $\Phi : [0,T] \rightarrow \mathbb R_+$ such that $\|\Phi\|_1 <1$. Let $v$ in $(0,T)$. We set 
\begin{equation}
\label{eq:Oneshift}
\left\lbrace
\begin{array}{l}
H_t^{v} = \displaystyle{\textbf{1}_{[0,v)}(t) H_t + \textbf{1}_{[v,T]}(t) \left(H_{v-}^{v} + 1 +\int_{(v,t]} \int_{\real_+} \textbf{1}_{\{\theta \leq \Lambda_s^{v}\}} N(ds,d\theta)\right)},\\\\
\Lambda_t^{v} = \displaystyle{\textbf{1}_{(0,v]}(t) \Lambda_t + \textbf{1}_{(v,T]}(t) \left(\mu^{v,1}(t) + \int_{(v,t)} \Phi(t-u) dH_u^{v}\right)},
\end{array}
\right.
\end{equation}
with $\mu^{v,1}(t):=\mu + \int_{(0,v]} \Phi(t-u) dH_u^{v} =\mu + \int_{(0,v)} \Phi(t-u) dH_u + \Phi(t-v)$.
\end{definition}

\begin{remark}
$(H^v,\Lambda^v)$ is called a shifted (at time $v$) Hawkes process but it is not a Hawkes process as it possesses a jump at the deterministic time $v$. However, it is a Hawkes process on $(0,v)$ (and coincides with the original Hawkes process $(H,\Lambda)$) and is a Hawkes process in the sense of Definition \ref{def:Hawkes} (generalized Hawkes process) on $[v,T]$.  
\end{remark}

\begin{definition}
Let $v$ in $[0,T]$, we set 
$$ 
\begin{array}{lll}
\eps_{(v,\Lambda_v)}^+ : &\Omega^N &\to \Omega^N\\
&\omega &\mapsto (\eps_{(v,\theta)}^+(\omega))_{\theta=\Lambda_v(\omega)}.
\end{array}
$$
\end{definition}

\begin{remark}
\label{rem:oneshift}
Let $v$ in $[0,T]$ and $\theta_0\geq 0$. As $\P[N(\{v\}\times \real_+) >0]=0$, a direct computation gives that
\begin{align*}
H_v \circ \eps_{(v,\theta_0)}^+&=\left(\int_{[0,v]} \int_{\real_+} \textbf{1}_{\{\theta \leq \Lambda_s\}} N(ds,d\theta)\right)\circ \eps_{(v,\theta_0)}^+ \\
&=H_{v-} + \textbf{1}_{\{\theta_0 \leq \Lambda_v\}}, \quad \P-a.s..
\end{align*}
Hence 
\begin{equation}
\label{eq:shifttemp}
\textbf{1}_{\{\theta_0 \leq \Lambda_v\}} \left(H_v \circ \eps_{(v,\theta_0)}^+\right) = \textbf{1}_{\{\theta_0 \leq \Lambda_v\}} \left(H_v \circ \eps_{(v,\Lambda_{v})}^+\right), \; \P-\textrm{a.s.}.
\end{equation}
As Equation (\ref{eq:Hv}) is completely determined by the values $h^v$ and $(\mu^v(t))_{t\in [0,T]}$ (recalling that $\Phi$ does not depend on $v$), we deduce that for any $\theta_0\geq 0$, on the set $\{\theta_0\leq \Lambda_v\}$,
$$ (H_t\circ \eps_{(v,\theta_0)}^+,\Lambda_t\circ \eps_{(v,\theta_0)}^+)_{t\in[v,T]} = (H_t\circ \eps_{(v,\Lambda_v)}^+,\Lambda_t\circ \eps_{(v,\Lambda_v)}^+)_{t\in[v,T]}.$$
This remark leads us to the following lemma.
\end{remark}

\begin{lemma}
\label{lemma:shift}
Let $v$ in $[0,T]$. We have that 
$$ (H\circ \eps_{(v,\Lambda_v)}^+, \Lambda \circ \eps_{(v,\Lambda_v)}^+) = (H^v,\Lambda^v),$$
where $(H^v,\Lambda^v)$ is defined in (\ref{eq:Oneshift}).
\end{lemma}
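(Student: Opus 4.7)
The plan is to verify that the pair $(\wt H, \wt \Lambda) := (H\circ \eps_{(v,\Lambda_v)}^+, \Lambda\circ \eps_{(v,\Lambda_v)}^+)$ satisfies exactly the system (\ref{eq:Oneshift}) defining $(H^v,\Lambda^v)$; the conclusion will then follow from the strong uniqueness of the Hawkes SDE on $[v,T]$ provided by Theorem \ref{th:wellposedSDE}, once we identify the initial value $h^v$ and the base intensity $\mu^v(\cdot)$.

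\textbf{Behaviour up to and at time $v$.} For $t<v$, the random variables $H_t$ and $\Lambda_t$ are $\mathcal F_{v-}^N$-measurable (note that $\Lambda_v$ itself is $\mathcal F_{v-}^N$-measurable, as the defining integral runs over the open interval $(0,v)$). Since $\eps_{(v,\Lambda_v)}^+$ modifies $N$ only on $\{v\}\times\real_+$, Remark \ref{rem:Nshift} gives $\wt H_t = H_t$ and $\wt\Lambda_t = \Lambda_t$ for $t<v$, and also $\wt\Lambda_v = \Lambda_v$. At $t=v$, the computation already carried out in Remark \ref{rem:oneshift} yields $\wt H_v = H_{v-} + \textbf{1}_{\{\Lambda_v \leq \Lambda_v\}} = H_{v-}+1$.

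\textbf{Dynamics after $v$.} For $t\in(v,T]$, split the defining integral for $H_t$ in (\ref{eq:H}) across $v$. By Remark \ref{rem:Nshift} applied to any Borel set $(v,t]\times B$, the Poisson measure is unchanged by the shift on the right of $v$, hence
\begin{equation*}
\wt H_t = \wt H_v + \int_{(v,t]}\int_{\real_+} \textbf{1}_{\{\theta \leq \wt\Lambda_s\}}\, N(ds,d\theta) = H_{v-}+1+\int_{(v,t]}\int_{\real_+} \textbf{1}_{\{\theta \leq \wt\Lambda_s\}}\, N(ds,d\theta).
\end{equation*}
For $\wt\Lambda_t$, decompose $(0,t) = (0,v) \cup \{v\} \cup (v,t)$; the added point at time $v$ creates a unit jump $\Delta \wt H_v = 1$ weighted by $\Phi(t-v)$, while $d\wt H_s = dH_s$ on $(0,v)$, so
\begin{equation*}
\wt\Lambda_t = \mu + \int_{(0,v)} \Phi(t-s)\,dH_s + \Phi(t-v) + \int_{(v,t)} \Phi(t-s)\,d\wt H_s = \mu^{v,1}(t) + \int_{(v,t)}\Phi(t-s)\,d\wt H_s.
\end{equation*}
Combined with the previous display, this is exactly the system (\ref{eq:Oneshift}), driven by the same Poisson measure $N$.

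\textbf{Conclusion.} Thus $(\wt H,\wt\Lambda)$ restricted to $[v,T]$ solves the SDE (\ref{eq:Hv}) of Theorem \ref{th:wellposedSDE} with the $\mathcal F_v^N$-measurable data $h^v = H_{v-}+1$ and $\mu^v(t) = \mu^{v,1}(t)$, and coincides with $(H,\Lambda)$ on $[0,v)$; the pair $(H^v,\Lambda^v)$ solves the very same system by definition. Strong uniqueness in Theorem \ref{th:wellposedSDE} forces $(\wt H,\wt\Lambda) = (H^v,\Lambda^v)$ $\P$-almost surely, which is the claim. The only delicate point is to justify that the shift commutes with the Stieltjes integral against $dH$: this reduces to the fact that $dH$ is the pure jump measure associated to the projection onto the time axis of the points of $N$ lying below the intensity curve, and the shift $\eps_{(v,\Lambda_v)}^+$ adds exactly one such point, whose contribution to the intensity is precisely the term $\Phi(t-v)$ isolated above.
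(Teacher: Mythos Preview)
Your proof is correct and follows essentially the same approach as the paper: both argue that the shifted pair coincides with $(H,\Lambda)$ before $v$, acquires a unit jump at $v$, and then satisfies the same Poisson-driven SDE on $[v,T]$ with data $h^v=H_{v-}+1$ and $\mu^v(\cdot)=\mu^{v,1}(\cdot)$, concluding by the strong uniqueness of Theorem~\ref{th:wellposedSDE}. Your write-up is in fact somewhat more detailed (explicitly invoking Remark~\ref{rem:Nshift} and isolating the $\Phi(t-v)$ contribution), but the logical skeleton is identical.
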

\begin{proof}
Recall that 
\begin{equation*}
\left\lbrace
\begin{array}{l}
H_t = H_{v} + \int_{(v,t]} \int_{\real_+} \textbf{1}_{\{\theta \leq \Lambda(s)\}} N(ds,d\theta),\quad t\in [0,T] \\\\
\Lambda(t) = \mu + \int_{(0,v]} \Phi(t-u) d H_u + \int_{(v,t)} \Phi(t-u) d H_u.
\end{array}
\right.
\end{equation*}
Thus, by Lemma \ref{lemma:mesur}, $H_t \circ \eps_{(v,\Lambda_v)}^+ = H_t$ for $t<v$ and $\Lambda_t \circ \eps_{(v,\Lambda_v)}^+ = \Lambda_t$ for $t \leq v$. Let $t\geq v$, we have that (recall that the jump times of $N$ shifted by $\Lambda_v$ after time $v$ coincide with those of $N$)
\begin{equation*}
\left\lbrace
\begin{array}{l}
H_t\circ \eps_{(v,\Lambda_v)}^+ = H_{v-} + 1 + \int_{(v,t]} \int_{\real_+} \textbf{1}_{\{\theta \leq \Lambda\circ \eps_{(v,\Lambda_v)}^+(s)\}} N(ds,d\theta),\quad t\in [0,T] \\\\
\Lambda_t\circ \eps_{(v,\Lambda_v)}^+ = \mu + \int_{(0,v)} \Phi(t-u) d H_u + \Phi(t-v) + \int_{(v,t)} \Phi(t-u) d H_u \circ\eps_{(v,\Lambda_v)}^+.
\end{array}
\right.
\end{equation*}
The result follows by uniqueness of the solution to this SDE.
\end{proof}

\noindent We now proceed iteratively to construct a Multi-shifted Hawkes process: the shifts $(v_1, \cdots, v_{n-1})$ being chosen,  the $n^{th}$ shift  is taken in  the interval $]0, v_n[$.

\begin{definition}[Multi-shifted Hawkes process]
\label{definition:multiShift} \mbox{ \quad }\\ 
Let $n \in \mathbb N^*$, and $0 < v_n < v_{n-1} < \cdots  < v_1 < T$. We set 
\begin{equation}
\label{eq:Multishifts}
\left\lbrace
\begin{array}{l}
H_t^{v_n,\ldots,v_1} = \displaystyle{\textbf{1}_{[0,v_n)}(t) H_t + \sum_{i=1}^{n} \textbf{1}_{[v_{i},v_{i-1})}(t) \left(H_{v_i-}^{v_n,\ldots,v_1} + 1 +\int_{(v_i,t]} \int_{\real_+} \textbf{1}_{\{\theta \leq \Lambda_s^{v_n,\ldots,v_1}\}} N(ds,d\theta)\right)},\\\\
\Lambda_t^{v_n,\ldots,v_1} = \displaystyle{\textbf{1}_{(0,v_n]}(t) \Lambda_t + \sum_{i=1}^{n} \textbf{1}_{(v_{i},v_{i-1}]}(t) \left(\mu^{v_i,n}(t) + \int_{(v_i,t)} \Phi(t-u) dH_u^{v_n,\ldots,v_1}\right)},
\end{array}
\right.
\end{equation}
with $\mu^{v_i,n}(t):=\mu + \int_{(0,v_i]} \Phi(t-u) dH_u^{v_n,\ldots,v_1} =\mu + \int_{(0,v_i)} \Phi(t-u) dH_u^{v_n,\ldots,v_1} + \Phi(t-v_i)$.
\end{definition}

\begin{remark}
Note that the process $H^{v_n,\ldots,v_1}$ is not a Hawkes process as it has deterministic jumps at times $v_n,\ldots,v_1$ but it is a generalized Hawkes process on each interval $(v_{i-1},v_i)$.
\end{remark}

\begin{prop}
\label{prop:nshifts}
Let $n \in \mathbb N^*$, and $0 < v_n < v_{n-1} < \cdots  < v_1 < T$. We have that 
$$ (H,\Lambda) \circ \eps_{(v_1,\Lambda_{v_1})}^+ \circ \cdots \circ \eps_{(v_n,\Lambda_{v_n})}^+ = (H^{v_n,\ldots,v_1},\Lambda^{v_n,\ldots,v_1}).$$
\end{prop}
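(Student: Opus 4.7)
The plan is an induction on the number of shifts $n$. The base case $n=1$ is precisely Lemma \ref{lemma:shift}. Assuming the identity for every ordered $(n-1)$-tuple of shift times, I would exploit associativity of function composition to split off the shift at the earliest time $v_n$ (which, since the composition reads right-to-left in $\eps_{(v_1,\Lambda_{v_1})}^+ \circ \cdots \circ \eps_{(v_n,\Lambda_{v_n})}^+$, is the innermost operator):
\begin{equation*}
(H,\Lambda) \circ \eps_{(v_1,\Lambda_{v_1})}^+ \circ \cdots \circ \eps_{(v_n,\Lambda_{v_n})}^+
= \bigl[(H,\Lambda) \circ \eps_{(v_1,\Lambda_{v_1})}^+ \circ \cdots \circ \eps_{(v_{n-1},\Lambda_{v_{n-1}})}^+\bigr] \circ \eps_{(v_n,\Lambda_{v_n})}^+.
\end{equation*}
The induction hypothesis identifies the bracketed factor with $(H^{v_{n-1},\ldots,v_1}, \Lambda^{v_{n-1},\ldots,v_1})$, so everything reduces to the subclaim
\begin{equation*}
(H^{v_{n-1},\ldots,v_1}, \Lambda^{v_{n-1},\ldots,v_1}) \circ \eps_{(v_n,\Lambda_{v_n})}^+ = (H^{v_n,v_{n-1},\ldots,v_1}, \Lambda^{v_n,v_{n-1},\ldots,v_1}).
\end{equation*}

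For the subclaim I would replay the SDE-uniqueness argument of Lemma \ref{lemma:shift}, now applied to the generalized multi-shifted Hawkes process $(H^{v_{n-1},\ldots,v_1}, \Lambda^{v_{n-1},\ldots,v_1})$. Denoting the left-hand side by $(\tilde H, \tilde \Lambda)$, I would verify interval by interval that it solves the Poisson SDE~(\ref{eq:Multishifts}): on $[0, v_n)$, iterated use of Lemma \ref{lemma:mesur} yields $(\tilde H_t, \tilde \Lambda_t) = (H_t, \Lambda_t)$; at $t = v_n$, no previously added point lies in $[0, v_n)$, so $\tilde\Lambda_{v_n} = \Lambda_{v_n}$ and the added point $(v_n, \Lambda_{v_n})$ lies exactly at the threshold, producing the deterministic jump of size one prescribed by Definition \ref{definition:multiShift}; on each subsequent interval $[v_i, v_{i-1})$ the dynamics of $(\tilde H, \tilde \Lambda)$ coincide with those of a generalized Hawkes process (Definition \ref{def:Hawkes}) started at $v_i$. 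Strong uniqueness from Theorem \ref{th:wellposedSDE}, applied subinterval by subinterval, then identifies $(\tilde H, \tilde \Lambda)$ with $(H^{v_n,\ldots,v_1}, \Lambda^{v_n,\ldots,v_1})$.

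The delicate point will be the bookkeeping for the intensity. Because of self-excitation, inserting a jump at $v_n$ modifies the value of $\Lambda$ at every later time through the kernel $\Phi$; in particular, the "height" coordinate $\Lambda_{v_i}$ of the point added at $v_i$ must be evaluated on the already-shifted configuration and will differ from the original $\Lambda_{v_i}(\omega)$. The core verification is that these cascading corrections reproduce exactly the term $\Phi(t - v_i)$ appearing in $\mu^{v_i,n}(t) = \mu + \int_{(0,v_i)} \Phi(t-u)\, dH_u^{v_n,\ldots,v_1} + \Phi(t - v_i)$ of Definition \ref{definition:multiShift}, and that the indicator $\textbf{1}_{\{\theta \leq \tilde \Lambda_s\}}$ matches $\textbf{1}_{\{\theta \leq \Lambda_s^{v_n,\ldots,v_1}\}}$ for every $s$. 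Once this matching of coefficients is in place, strong uniqueness of the Poisson SDE closes the induction.
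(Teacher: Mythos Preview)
Your proposal is correct and follows essentially the same approach as the paper: induction on $n$ with base case Lemma~\ref{lemma:shift}, reduction to the single-step subclaim $(H^{v_{n-1},\ldots,v_1},\Lambda^{v_{n-1},\ldots,v_1})\circ\eps_{(v_n,\Lambda_{v_n})}^+=(H^{v_n,\ldots,v_1},\Lambda^{v_n,\ldots,v_1})$, and verification of that subclaim by writing out the shifted SDE on each interval and invoking strong uniqueness from Theorem~\ref{th:wellposedSDE}. The paper carries out the subclaim by an explicit piecewise computation of $H^{n-1,+}$ and $\Lambda^{n-1,+}$, whereas you describe the same verification more schematically, but the argument and its delicate point (the intensity bookkeeping you flag) are identical.
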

\noindent The proof is postponed to Section \ref{section:tecbis}. 
 We conclude this section on shifted Hawkes processes with the following remark.

\begin{remark}
\label{rem:multishifts}
Let $v<v_n$ and $\theta_0\geq 0$. Following the lines of Remark \ref{rem:oneshift}, we get that 
$$ \textbf{1}_{\{\theta_0 \leq \Lambda_v\}}(H_t^{v_n,\ldots,v_1}\circ \eps_{(v,\theta_0)}^+,\Lambda_t^{v_n,\ldots,v_1}\circ \eps_{(v,\theta_0)}^+)_{t\in[v,T]} =  \textbf{1}_{\{\theta_0 \leq \Lambda_v\}} (H_t^{v_n,\ldots,v_1}\circ \eps_{(v,\Lambda_v)}^+,\Lambda_t^{v_n,\ldots,v_1}\circ \eps_{(v,\Lambda_v)}^+)_{t\in[v,T]}.$$
\end{remark}

\subsection{Expansion formula for the Hawkes process}
{Let $H=(H_t)_{t\in [0,T]}$ be a standard Hawkes process with parameters $\mu >0$ and  bounded  non-negative  kernel $\Phi$ with $\|\Phi\|_1 <1$, solution to the SDE \eqref{eq:H} and build on the Poisson space $(\Omega^N, \mathcal F_T^N, \mathbb P ^N)$, according to Section \ref{sec:Poisson2}.}
In line of the results obtained in the previous section, we can derive the lemma below. 

\begin{lemma}
\label{lemma:shiftint}
Let $F$ be a $\mathcal F_T^N$-measurable random variable and  $v>0$. We have that 
$$ \int_{\real_+} (F\circ \eps_{(v,\theta)}^+) \textbf{1}_{\{\theta \leq \Lambda_v\}} d\theta = \Lambda_v (F\circ \eps_{(v,\Lambda_v)}^+), \quad \P-\textrm{a.s.}.$$ 
\end{lemma}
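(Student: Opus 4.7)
The plan is to reduce the integral on the left-hand side to a product by showing that, for $\P$-a.e.\ $\omega$, the map $\theta \mapsto (F \circ \eps_{(v,\theta)}^+)(\omega)$ is constant in $\theta$ over the interval $[0,\Lambda_v(\omega)]$ and takes there the value $(F \circ \eps_{(v,\Lambda_v)}^+)(\omega)$.

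The key ingredient will be the identification stated at the end of Remark \ref{rem:oneshift}: for any deterministic $\theta_0 \geq 0$, on $\{\theta_0 \leq \Lambda_v\}$ the pairs
\begin{equation*}
(H_t \circ \eps_{(v,\theta_0)}^+, \Lambda_t \circ \eps_{(v,\theta_0)}^+)_{t\in [v,T]} \quad \text{and} \quad (H_t \circ \eps_{(v,\Lambda_v)}^+, \Lambda_t \circ \eps_{(v,\Lambda_v)}^+)_{t\in [v,T]}
\end{equation*}
coincide, because the defining SDE (\ref{eq:Hv}) sees the added point $(v,\theta_0)$ only through the indicator $\textbf{1}_{\{\theta_0 \leq \Lambda_v\}}$ (i.e.\ through whether a jump is accepted at time $v$) and not through the particular height of the mark. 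Combined with Lemma \ref{lemma:mesur}, which guarantees that shifts located at time $v$ leave $(H,\Lambda)$ unchanged on $[0,v)$, this yields $F \circ \eps_{(v,\theta_0)}^+ = F \circ \eps_{(v,\Lambda_v)}^+$ on $\{\theta_0 \leq \Lambda_v\}$ whenever $F$ is realized as a functional of the Hawkes pair built from $N$ via the thinning procedure of Corollary \ref{constructionHawkes}, which is the relevant setting for the later uses of this lemma.

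It will then suffice to substitute this identification into the integrand and factor out the $\theta$-independent quantity $F \circ \eps_{(v,\Lambda_v)}^+$:
\begin{equation*}
\int_{\real_+} (F\circ \eps_{(v,\theta)}^+)\, \textbf{1}_{\{\theta \leq \Lambda_v\}}\, d\theta = (F\circ \eps_{(v,\Lambda_v)}^+) \int_0^{\Lambda_v} d\theta = \Lambda_v (F\circ \eps_{(v,\Lambda_v)}^+).
\end{equation*}
The main conceptual point here (rather than any technical obstacle) is the justification that $F$ depends on the underlying Poisson configuration only through the Hawkes pair $(H,\Lambda)$ so that, below $\Lambda_v$, the specific value of $\theta$ in the shift $\eps_{(v,\theta)}^+$ is immaterial; this is exactly the content of the discussion following equation (\ref{eq:shifttemp}) in Remark \ref{rem:oneshift}.
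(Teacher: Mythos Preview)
Your proposal is correct and follows essentially the same route as the paper: the paper's proof simply records that, for each $\theta\ge 0$, the identity $\textbf{1}_{\{\theta\le\Lambda_v\}}(F\circ\eps_{(v,\theta)}^+)=\textbf{1}_{\{\theta\le\Lambda_v\}}(F\circ\eps_{(v,\Lambda_v)}^+)$ follows from Remark~\ref{rem:oneshift} (via Lemma~\ref{lemma:shift}) and Remark~\ref{rem:multishifts}, and then integrates in $\theta$, which is exactly your argument. You are in fact more careful than the paper in flagging that this identification really uses that $F$ is a functional of the Hawkes pair $(H,\Lambda)$ (or its shifted versions), rather than an arbitrary $\mathcal F_T^N$-measurable variable; the paper leaves this point implicit in its citations.
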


\begin{proof}
Let $\theta\geq 0$. Remarks \ref{lemma:shift} and \ref{rem:multishifts} entail 
$$\textbf{1}_{\{\theta \leq \Lambda_v\}} (F\circ \eps_{(v,\theta)}^+) = \textbf{1}_{\{\theta \leq \Lambda_v\}} (F\circ \eps_{(v,\Lambda_v)}^+),$$
which concludes the proof.
\end{proof}

\begin{definition}
\label{definition:Fshift}
Let $F$ be a $\mathcal F^N_T$-measurable random variable and $Z$ a $\mathbb F^N$-predictable process. 
We set 
\begin{itemize}
\item[(i)] for $v$ in $(0,T)$,
$$ F^v:=F  \circ \eps_{(v,\Lambda_{v})}^+, \quad Z^{v} := Z\circ \eps_{(v,\Lambda_{v})}^+;$$
\item[(ii)] for $n \geq 2$ and $0 < v_n < v_{n-1} < \cdots < v_1 < T$, 
\begin{equation}
\label{eq:funcshifted}
F^{v_n,\ldots,v_1} := F^{v_{n-1},\ldots,v_1} \circ \eps_{(v_n,\Lambda_{v_n})}^+, \quad Z^{v_n,\ldots,v_2} := Z^{v_{n-1},\ldots,v_2}\circ \eps_{(v_n,\Lambda_{v_n})}^+.
\end{equation}
\end{itemize}
\end{definition}

{\begin{notation}\label{notation:mPhi} 
For  $n=1$,  we set  $ m_\Phi(\Delta^1):=1$ and for $n\geq 2$, 
$$ m_\Phi(\Delta^n):=\int_0^T  \int_0^{v_{1}}\cdots \int_0^{v_{n-1}} \prod_{i=2}^{n} \Phi(v_{i-1}-v_{i}) d v_n\cdots dv_1, $$
\end{notation}
\noindent To ensure the convergence of the forthcoming expansion formula, one will need  the following assumption  
\begin{equation}\label{hyp:mPhi} 
 \lim_{n \rightarrow +\infty } m_\Phi(\Delta^n) =0.
 \end{equation}
Remark that if $\|\Phi\|_\infty <1$,  $m_\Phi(\Delta^n)  \leq \frac{T^n}{n!}$ and Relation \eqref{hyp:mPhi}  is satisfied.
}

\noindent We have now introduced all the ingredients to state  the expansion formula for the Hawkes process. 

\begin{theorem} [Expansion formula for the Hawkes process] 
\label{th:mainIPP}  \mbox{  \quad }\\
Let $F$ be a bounded $\mathcal{F}_T^N$-measurable random variable and $Z=(Z_t)_{t\in [0,T]}$ be a bounded \\
$\mathbb F^H$-predictable process.    Then for any $M \in \mathbb N^*$, 
\begin{align}
\label{eq:nstepformula}
&\E\left[F \int_{[0,T]} Z_t dH_t\right] \nonumber\\
&=\mu \int_0^T \E\left[Z_{v}  F^{v} \right] dv \nonumber\\
& \quad  +\mu \sum_{n=2}^M \int_0^T  \int_0^{v_{1}}  \cdots \int_0^{v_{n-1}} \prod_{i=2}^{n} \Phi(v_{i-1}-v_{i}) \E\left[Z_{v_1}^{v_n,\ldots,v_2} F^{v_n,\ldots,v_1} \right] d v_n\cdots dv_1 \nonumber\\
&\quad +\int_0^T  \int_0^{v_{1}}  \cdots \int_0^{v_{M}} \prod_{i=2}^{M+1} \Phi(v_{i-1}-v_{i}) \E\left[Z_{v_1}^{v_{M+1},\ldots,v_2} F^{v_{M+1},\ldots,v_1} \Lambda_{v_{M+1}} \right] d v_{M+1} \cdots dv_1.
\end{align}
In addition  if Relation \eqref{hyp:mPhi} is satisfied $(\lim_{n \rightarrow +\infty } m_\Phi(\Delta^n) =0)$, we have that 
\begin{align}
\label{eq:nstepformulabis}
&\E\left[F \int_{[0,T]} Z_t dH_t\right] \nonumber\\
&= \mu \int_0^T \E\left[Z_{v}  F^{v} \right] dv \nonumber\\
&+\mu \sum_{n=2}^{+\infty} \int_0^T  \int_0^{v_{1}}  \cdots \int_0^{v_{n-1}} \prod_{i=2}^{n} \Phi(v_{i-1}-v_{i}) \E\left[Z_{v_1}^{v_n,\ldots,v_2} F^{v_n,\ldots,v_1} \right] d v_n\cdots dv_1.
\end{align}
\end{theorem}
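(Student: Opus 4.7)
The plan is to establish \eqref{eq:nstepformula} by induction on $M$, repeatedly applying Mecke's formula (Proposition \ref{prop:IPP}) in combination with Lemma \ref{lemma:shiftint} and the Hawkes decomposition $\Lambda_t=\mu+\int_{(0,t)}\Phi(t-s)dH_s$, and then to deduce \eqref{eq:nstepformulabis} by bounding the resulting remainder uniformly in $M$.

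I first use the Poisson embedding of Corollary \ref{constructionHawkes} to rewrite
\[
\int_{[0,T]} Z_t dH_t = \int_{[0,T]\times\real_+} Z_t \textbf{1}_{\{\theta\leq\Lambda_t\}} N(dt,d\theta).
\]
Since $Z$ and $\Lambda$ are both $\mathbb F^H$-predictable, $\mathcal Z(t,\theta)=Z_t\textbf{1}_{\{\theta\leq\Lambda_t\}}$ is $\mathbb F^N$-predictable and hence satisfies \eqref{eq:Prev-presque}, while boundedness of $Z$ together with $\E[H_T]<+\infty$ supplies the integrability required by Mecke. Applying Proposition \ref{prop:IPP} to move $F$ through the integral and then Lemma \ref{lemma:shiftint} to integrate out $\theta$ yields
\[
\E\!\left[F \int_{[0,T]} Z_t dH_t\right] = \int_0^T \E\!\left[Z_{v_1} \Lambda_{v_1} F^{v_1}\right] dv_1.
\]
Splitting $\Lambda_{v_1}=\mu+\int_{(0,v_1)}\Phi(v_1-u)dH_u$ isolates the $n=1$ term $\mu\int_0^T \E[Z_{v_1}F^{v_1}]dv_1$ of \eqref{eq:nstepformula} (recalling $m_\Phi(\Delta^1)=1$) and leaves a remainder of the same shape carrying an inner $dH$-integral.

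The induction step promotes level $k$ to level $k+1$: assume the current remainder is
\[
R_k = \int_0^T\!\int_0^{v_1}\!\cdots\!\int_0^{v_{k-1}} \prod_{i=2}^{k} \Phi(v_{i-1}-v_i) \, \E\!\left[Z_{v_1}^{v_k,\ldots,v_2} F^{v_k,\ldots,v_1} \Lambda_{v_k}\right] dv_k\cdots dv_1.
\]
Decomposing $\Lambda_{v_k}=\mu+\int_{(0,v_k)}\Phi(v_k-s)dH_s$ extracts the $n=k$ term of \eqref{eq:nstepformula}. For the $dH$-piece I reapply the Poisson embedding and Mecke's formula with the bounded $\mathcal F_T^N$-measurable test random variable $G := Z_{v_1}^{v_k,\ldots,v_2} F^{v_k,\ldots,v_1}$; the new outer integrand $\Phi(v_k-s)\textbf{1}_{\{\theta\leq\Lambda_s\}}$ is $\mathbb F^N$-predictable, so \eqref{eq:Prev-presque} holds. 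For $s<v_k$ and $\theta\leq\Lambda_s$, Remark \ref{rem:multishifts} collapses $G\circ\eps_{(s,\theta)}^+$ into $G\circ\eps_{(s,\Lambda_s)}^+$, which Definition \ref{definition:Fshift} identifies as $Z_{v_1}^{s,v_k,\ldots,v_2} F^{s,v_k,\ldots,v_1}$. Integrating out $\theta$ by Lemma \ref{lemma:shiftint} and relabelling $s=v_{k+1}$ produces $R_{k+1}$, closing the induction and proving \eqref{eq:nstepformula}.

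For \eqref{eq:nstepformulabis} I bound $|R_{M+1}|$. Boundedness of $Z$ and $F$ is preserved by the shifts, so the integrand is dominated by $\|Z\|_\infty\|F\|_\infty \E[\Lambda_{v_{M+1}}]$; the classical stability estimate $\sup_{t\leq T}\E[\Lambda_t]\leq \mu/(1-\|\Phi\|_1)$, obtained from $\E[\Lambda_t]=\mu+\int_0^t\Phi(t-s)\E[\Lambda_s]ds$ under $\|\Phi\|_1<1$, gives $|R_{M+1}|\leq \mu(1-\|\Phi\|_1)^{-1}\|Z\|_\infty\|F\|_\infty m_\Phi(\Delta^{M+1})\to 0$ under \eqref{hyp:mPhi}. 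The principal obstacle is the careful bookkeeping of the nested shifts at each induction step: one must verify that $(Z_{v_1}^{v_{k-1},\ldots,v_2} F^{v_{k-1},\ldots,v_1})\circ\eps_{(v_k,\Lambda_{v_k})}^+$ matches $Z_{v_1}^{v_k,\ldots,v_2} F^{v_k,\ldots,v_1}$ in the sense of Definition \ref{definition:Fshift}, a verification that rests on Proposition \ref{prop:nshifts} and Remark \ref{rem:multishifts}; all interchanges of expectation and iterated Lebesgue integrals are handled by Fubini using boundedness throughout.
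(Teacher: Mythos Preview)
Your proposal is correct and follows essentially the same approach as the paper: iterated application of Mecke's formula combined with Lemma \ref{lemma:shiftint}, the decomposition $\Lambda_{v_k}=\mu+\int_{(0,v_k)}\Phi(v_k-s)dH_s$ at each step, and the identification of the shifted functionals via Proposition \ref{prop:nshifts} and Remark \ref{rem:multishifts}, with the remainder controlled by the uniform bound $\sup_{t\leq T}\E[\Lambda_t]\leq \mu/(1-\|\Phi\|_1)$ and assumption \eqref{hyp:mPhi}. Your explicit flagging of the bookkeeping of nested shifts as the main technical point matches precisely where the paper's argument relies on the preparatory results of Section \ref{section:shifted}.
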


\begin{remark}
\label{rem:postmain}
Remark that the first term  $\mu \int_0^T \E\left[Z_{v}  F^{v} \right] dv$ corresponds to the formula for a Poisson process (setting the  self-exciting kernel $\Phi$ at zero). Therefore the sum in the second term can be interpreted as a correcting term due to the self-exciting property of the counting process $H$.  Besides, this formula can be used to provide lower and upper bounds on the premium of insurance contracts. 
\end{remark}

\begin{proof}
Set $\mathcal Z(t,\theta)= Z_t \textbf{1}_{\{\theta \leq \Lambda_t\}}$. As $Z$ is $\mathbb F^H$-predictable, it is $\mathbb F^N$-predictable and thus Relation (\ref{eq:Prev-presque}) is satisfied. 
Thus Mecke's formula for Poisson functionals (see Proposition \ref{prop:IPP}) gives that 
\begin{align*}
&\E\left[F \int_{[0,T]} Z_t dH_t\right]\\
&=\E\left[F \int_{[0,T]} \int_{\real_+} Z_t \textbf{1}_{\{\theta \leq \Lambda_t\}} N(dt,d\theta)\right]\\
&=\E\left[\int_{[0,T]} \int_{\real_+} Z_t (F\circ \eps_{(t,\theta)}^+) \textbf{1}_{\{\theta \leq \Lambda_t\}} dt d\theta\right]\\
&=\int_{[0,T]} \E\left[Z_t \int_{\real_+} (F\circ \eps_{(t,\theta)}^+) \textbf{1}_{\{\theta \leq \Lambda_t\}} d\theta\right]dt\\
&=\int_{[0,T]} \E\left[Z_t (F\circ \eps_{(t,\Lambda_t)}^+) \Lambda_t\right]dt\\
&=\mu \; m_1 +I_1,
\end{align*}
with 
$$m_1:= \int_{[0,T]} \E\left[Z_t (F\circ \eps_{(t,\Lambda_t)}^+) \right]dt,$$
$$I_1:=\int_{[0,T]} \E\left[Z_v (F\circ \eps_{(v,\Lambda_v)}^+) \int_{(0,v)} \Phi(v-u) dH_u \right]dv.$$
Setting $F^v:=F\circ \eps_{(v,\Lambda_v)}^+$,  once again by Proposition \ref{prop:IPP}
we have that \begin{align*}
I_1&=\int_{[0,T]} \E\left[Z_{v_1} F^{v_1} \int_{(0,v_1)} \Phi(v_1-v_2) dH_{v_2} \right]dv_1\\
&=\int_{[0,T]} \E\left[Z_{v_1} F^{v_1} \int_{(0,v_1)} \Phi(v_1-v_2) \int_{\real_+} \textbf{1}_{\{\theta \leq \Lambda_{v_2}\}} N(dv_2,d\theta) \right]dv_1\\
&=\int_{[0,T]} \int_0^{v_1}\E\left[\Phi(v_1-v_2) \int_{\real_+} Z_{v_1}\circ \eps_{(v_2,\theta)}^+ F^{v_1}\circ \eps_{(v_2,\theta)}^+ \textbf{1}_{\{\theta \leq \Lambda_{v_2}\}} d\theta \right]dv_2 dv_1\\
&=\int_{[0,T]} \int_0^{v_1} \Phi(v_1-v_2) \E\left[ Z_{v_1}\circ \eps_{(v_2,\Lambda_{v_2})}^+ F^{v_1}\circ \eps_{(v_2,\Lambda_{v_2})}^+ \Lambda_{v_2}\right]dv_2 dv_1\\
&=\mu \; m_2 + I_2,
\end{align*}
with 
$$ m_2:=\int_{[0,T]} \int_0^{v_1} \Phi(v_1-v_2) \E\left[ Z_{v_1}\circ \eps_{(v_2,\Lambda_{v_2})}^+ F^{v_1}\circ \eps_{(v_2,\Lambda_{v_2})}^+ \right]dv_2 dv_1,$$
and 
$$ I_2:= \int_{[0,T]}\int_0^{v_1}\Phi(v_1-v_2) \E\left[ Z_{v_1}\circ \eps_{(v_2,\Lambda_{v_2})}^+ F^{v_1}\circ \eps_{(v_2,\Lambda_{v_2})}^+ \int_{(0,v_2]} \Phi(v_3-v_2) dH_{v_3} \right]dv_2 dv_1.$$
For $n \geq 2$ we set 
$$ m_n:= \int_0^T \int_0^{v_1}\cdots \int_0^{v_{n-1}} \prod_{i=2}^{n} \Phi(v_{i-1}-v_{i}) \E\left[Z_{v_1}^{v_n,\ldots,v_2} F^{v_n,\ldots,v_1} \right] d v_n\cdots dv_1,$$
and
$$ I_n:= \int_0^T  \int_0^{v_1}\cdots \int_0^{v_{n-1}} \prod_{i=2}^{n} \Phi(v_{i-1}-v_{i}) \E\left[Z_{v_1}^{v_n,\ldots,v_2} F^{v_n,\ldots,v_1} \int_{(0,v_n]} \Phi(v_{n+1}-v_n) dH_{v_{n+1}} \right] d v_n\cdots dv_1.$$
We have that 
\begin{align*}
&\hspace{-3em}I_n \\
&\hspace{-3em}=\int_0^T\int_0^{v_1} \cdots \int_0^{v_{n-1}} \prod_{i=2}^{n} \Phi(v_{i-1}-v_{i}) \E\left[Z_{v_1}^{v_n,\ldots,v_2} F^{v_n,\ldots,v_1} \int_{(0,v_n]} \Phi(v_{n+1}-v_n) \textbf{1}_{\{\theta \leq \Lambda_{v_{n+1}}\}} N(d v_{n+1},d\theta) \right] d v_n\cdots dv_1\\
&\hspace{-3em}=\int_0^T \int_0^{v_1} \cdots \int_0^{v_{n}} \prod_{i=2}^{n+1} \Phi(v_{i-1}-v_{i}) \E\left[\int_{\real_+} (Z_{v_1}^{v_n,\ldots,v_2} F^{v_n,\ldots,v_1})\circ \eps^+ _{(v_{n+1},\theta)} \textbf{1}_{\{\theta \leq \Lambda_{v_{n+1}}\}} d\theta \right] d v_{n+1} \cdots dv_1\\
&\hspace{-3em}=\int_0^T \int_0^{v_1}\cdots \int_0^{v_{n}} \prod_{i=2}^{n+1} \Phi(v_{i-1}-v_{i}) \E\left[(Z_{v_1}^{v_n,\ldots,v_2} F^{v_n,\ldots,v_1})\circ \eps^+ _{(v_{n+1},\Lambda_{v_{n+1}})}\Lambda_{v_{n+1}} \right] d v_{n+1} \cdots dv_1\\
&\hspace{-3em}=\int_0^T\int_0^{v_1} \cdots \int_0^{v_{n}} \prod_{i=2}^{n+1} \Phi(v_{i-1}-v_{i}) \E\left[Z_{v_1}^{v_{n+1},\ldots,v_2} F^{v_{n+1},\ldots,v_1}\Lambda_{v_{n+1}}\right] d v_{n+1} \cdots dv_1\\
&\hspace{-3em}=\mu \; m_{n+1} + I_{n+1}.
\end{align*}
\noindent Hence, by induction, we obtain the first part  of  Theorem \ref{th:mainIPP}  (Equation \eqref{eq:nstepformula}).\\
{ Classical estimates (using the expression of $\Lambda$ in Definition \ref{def:standardHawkes} and \cite[Lemma 3]{Bacry_et_al_2013}) yield  that $ \E[\Lambda_t] \leq \mu \left(1+\frac{\|\Phi\|_1}{1-\|\Phi\|_1}\right)$,  for any $t$}. Therefore, as $Z$ and $F$ are assumed to be bounded, there exists $C>0$ (depending on  $\|Z\|_\infty$,  $\|F\|_\infty$, $\mu$ and $\|\Phi\|_1$) such that 
\begin{align*}
&\int_0^T \int_0^{v_1} \cdots \int_0^{v_{M}} \prod_{i=2}^{M+1} \Phi(v_{i-1}-v_{i}) \E\left[Z_{v_1}^{v_{M+1},\ldots,v_2} F^{v_{M+1},\ldots,v_1}  \Lambda_{v_{M+1}} \right] d v_{M+1} \cdots dv_1\\
&\leq \b{C }    \int_0^T  \int_0^{v_1} \cdots \int_0^{v_{M}} \prod_{i=2}^{M+1} \Phi(v_{i-1}-v_{i})    d v_{M+1} \cdots dv_1= \b{C }  m_\Phi(\Delta^{M+1}) \end{align*}
which converges to zero as $M \rightarrow +\infty $ due to Relation  \eqref{hyp:mPhi}.
\end{proof}
\noindent An alternative form can be given for the representation \eqref{eq:nstepformulabis}. Let $n\geq 1$ and $\Delta^n$ the $n$-dimensional simplex of $[0,T]$ : 
$$ \Delta^n:=\left\{0< v_n < \cdots < v_1< T\right\}.$$
Let $\mathcal U^n$ be a flat Dirichlet distribution on $\Delta^n$ that is a continuous random variable uniformly distributed  on  $\Delta^n$, then for any Borelian integrable map $\varphi:\real^n \to \real$, 
$$ \E\left[\varphi(\mathcal U^n)\right]= \frac{n! }{T^n} \int_0^T \cdots \int_0^{v_{n-1}} \varphi(v_n,\ldots,v_1) dv_n \cdots dv_1.$$

\begin{remark}
Let $(\mathcal U^n)_{n\geq 2}$ be a sequence of independent random variables such that  \\ $\mathcal U^i:=(\mathcal U_1^i,\ldots,\mathcal U_i^i)$ is a Dirichlet distribution on $\Delta^i$. Then 
\begin{align}
\label{eq:nstepformulaalternative}
\E\left[F \int_{[0,T]} Z_t dH_t\right] 
&=\mu \int_0^T \E\left[Z_{v} F^{v} \right] dv \nonumber\\
&\quad + \mu \sum_{n=2}^{+\infty} \frac{T^n}{n!} \E\left[\prod_{i=2}^{n} \Phi(\mathcal U^n_{i-1}-\mathcal U^n_{i}) Z_{\mathcal U^n_1}^{\mathcal U^n_n,\ldots,\mathcal U^n_2} F^{\mathcal U^n_n,\ldots,\mathcal U^n_1}\right] .
\end{align}
\end{remark}

\section{Insurance derivatives for cyber risk}
\label{section:insurance}
The expansion formula can be applied to compute closed formula for   the premium of a class of insurance and financial derivatives (such as reinsurance contracts including generalized Stop-Loss contracts, or CDO tranches) or risk management instruments (like Expected Shortfall), { { to provide generalizations of results that have  been proved in  \cite{HJR2018}  in a Cox model setting. \\
In this section, we  choose to focus  on cyber reinsurance contracts.   Indeed, one important feature of cyber risk  is the presence of accumulation phenomena and contagion, than can not be accurately modeled by Poisson processes (see  the  statistical analysis of  Bessy et al. \cite{bessy2020multivariate}, based on the public Privacy Rights Clearinghouse database).  This means that assuming a Poisson process to model the frequency of cyber risk  may induce an underestimation of the risk, due to a misspecification of  the dependency and self-exciting components of the risk.
  Developing a valuation formula for cyber contracts, taking into account  this self-exciting  feature is thus a crucial challenge for cyber insurance. 
 }

\subsection{The cumulative loss processes and derivatives payoffs}
The so-called cumulative loss process is a key process for risk analysis in insurance and reinsurance. It corresponds to  the cumulative sum of claims amounts, the sum being indexed by a counting process modeling the arrival times of the claims. In standard models, the counting process is assumed to be a Poisson process, that is the claims inter-arrivals are assumed iid with exponential distribution. {Nevertheless,  for cyber risk,   a  Hawkes process modeling is more appropriate, due to the auto-excitation feature of cyber  risk (see  \cite{bessy2020multivariate}, \cite{BALDWIN2017}  or  \cite{PENG2016}).}
In the following, we propose closed-form formula for insurance derivatives, such as stop-loss contracts, for which the cumulative loss process is indexed by a Hawkes process 
$L_t := \sum_{i=1}^{H_t} X_i$
where $(H_t)$ is a Hawkes process and $X_i$ models the $i^{th}$-claim amount, that arrives at the random time $\tau^H_i$.\\

\noindent More precisely, the mathematical framework is the following. {Let $(\Omega^N,\mathcal F^N,\P^N)$  the probability space as defined previously.}
We consider $(\Omega^C,\mathcal F^C,\P^C)$  a probability space on which we define  $(\eta_1,\vartheta_1)$ a  $\real_+^2$-valued random variable (we denote by $\mu$ its distribution) and  $(\eta_i,\vartheta_i)_{i\geq 2}$  {and $(\bar \eta_i, \bar \vartheta_i)_{i\geq 1}$}
that are  independent copies of $(\eta_1,\vartheta_1)$.
We set 
$$ \Omega:=\Omega^N \times \Omega^C, \quad \mathbb F:=(\mathcal F_t)_{t\in [0,T]}, \quad \mathcal F_t:=\mathcal F_t^H \otimes \mathcal F^C, \quad \P:=\P^N\otimes\P^C.$$ 
Note that variables (or processes) defined only on $\Omega^N$ (respectively $\Omega^C$) naturally extend to $\Omega$. In addition, $H$ and the underlying Poisson process $N$ are independent of the variables $\eta_i,\bar{\eta}_i,\vartheta_i,\bar{\vartheta}_i$
($i\geq 1$).\\
\noindent We now define the  cumulative loss processes and the insurance derivatives we consider. One  typical example of  insurance contract is Stop-Loss contract. 
A Stop-Loss contract provides to its buyer protection against losses which are larger than  a given threshold $\underline K$.  
If the $i^{th}$-claim  size is $f(\eta_i)$, then the cumulative loss process is given by $L_t := \sum_{i=1}^{H_t} e^{-\kappa (t-\tau^H_i)}  f(\eta_i)$, where $e^{-\kappa (t-\tau^H_i)}$  is a discount factor. {The process $(L_t)$ is the loss that activates the contract.  
Sometimes the compensation amount 
are not exactly the ones that are computed to activate the reinsurance contract
and may depend on other losses $\vartheta_i$.  The "generalized" loss process is then given by  $K_t := \sum_{i=1}^{H_t}  e^{-\kappa (t-\tau^H_i)}g(\eta_i,\vartheta_i)$ .
For example,  for a stop loss contract, the reinsurance company pays the loss amount above a threshold $\underline K$, up to a given amount $(\bar K - \underline K)$ fixed by the contract.
The payoff of a generalized stop loss contract is  then given by 
\begin{equation}\label{payoff generalized stop loss}
\begin{cases} 0, &\textrm{ if } L_T\leq \underline K\\
K_T-\underline K, &\textrm{ if } \underline K \leq L_T \leq \bar K \\
\bar K  -\underline K, &\textrm{ if } L_T\geq \bar K. \end{cases},\end{equation}
Then, the premium\footnote{Remark that when the risk of the contract is neither hedgeable nor even related to a financial market, the  premium  of the contract  relies on the computation of the expectation of the payoff, under the physical probability measure $\mathbb P$.} of such a contract is 
\begin{equation}\label{decomposed general stop loss} 
\E\left[K_T \ind{L_T> \underline K}\right] -  \underline K \P\left[L_T \in [ \underline K,\bar K]\right] + (\bar K - \underline K) \P\left[L_T \geq \bar K \right]. 
\end{equation}
Therefore one has to compute quantities of the form   $\E\left[ K_T h\left(L_T \right)\right]$. The definitions are gathered below.  }

\begin{definition}[Insurance derivative contract]\footnote{ We put strong a condition of boundedness on the functions $f$, $g$ and $h$ since this condition is in force for actuarial derivatives, but it can be relaxed to a weaker integrability condition}\\
We denote by $(\tau^H_i)_{i \in \mathbb N}$ the jump times of the counting process $H$.
\begin{itemize}
\item[(i)]Given $f:\real_+ \to \real_+$  a bounded deterministic function, the loss process is
\begin{equation}
\label{eq:L}
L_t := \sum_{i=1}^{H_t}  e^{-\kappa (t-\tau^H_i)}f(\eta_i), \quad t\in [0,T].
\end{equation}
\item[(ii)] Given $g:\mathbb{R}_+^2 \to \real_+$ a bounded deterministic function, the generalized loss process is 
\begin{equation}
\label{eq:lossbis}
K_t := \sum_{i=1}^{H_t} e^{-\kappa (t-\tau^H_i)} g(\eta_i,\vartheta_i), \quad t\in [0,T].
\end{equation}
\item[(iii)] 
Let $h:\real_+\to \real_+$ be a bounded deterministic function.  We aim at computing  the expectation of the derivatives payoff, that is  the quantity 
\begin{equation}
\label{eq:generalpayoff}
\E\left[K_T h\left(L_T\right)\right].
\end{equation}
\end{itemize}
\end{definition}

\subsection{General pricing formula}
For the  computation  of  $\E\left[K_T h\left(L_T\right)\right]$, we will rely on the formula in Theorem \ref{th:mainIPP}, by writing the cumulative generalized loss process $K_t$  as an integral with respect to the Hawkes process. Namely,  
$$ K_T =\int_{(0,T]} Z_s dH_s, \quad t \in [0,T]$$
with
$$ Z_s := \sum_{i=1}^{+\infty} g(\eta_i,\vartheta_i)   e^{-\kappa (T-s)}   \textbf{1}_{(\tau_{i-1}^H,\tau_i^H]}(s), \quad s \in [0,T].$$
We first introduce the following definition.
\begin{definition}
Let $n\in \mathbb N^*$ and $0<v_n<\cdots<v_1<T$. \\
We set $\tau^{v_n,\ldots,v_1}_i$ the $i^{th}$-jump time of the shifted Hawkes process $H_{T}^{v_n,\ldots,v_1}$ and 
$$ L_T^{v_n,\ldots,v_1}:= \sum_{i=1}^{H_{T}^{v_n,\ldots,v_1}}   e^{-\kappa (T-\tau^{v_n,\ldots,v_1}_i)}    f(\eta_i) .$$
\end{definition}
\begin{theorem}
\label{th:main_I}
Under the previous assumptions  and Relation  \eqref{hyp:mPhi},  it holds that
\begin{align}
\label{eq:mainSL}
&\E[K_{T} h(L_{T})]\nonumber\\
&\hspace{-1em}=\mu \int_0^T e^{-\kappa (T-v_1)} \E\left[g(\bar{\eta}_1,\bar{\vartheta}_1) \E\left[h\left(e^{-\kappa (T-v_1)} f(\bar{\eta}_1)+\sum_{\underset{ \tau^{v_1}_i \neq v_1}{i=1}}^{(H_T^{v_1})-1} e^{-\kappa (T-\tau^{v_1}_{i})} f(\eta_i)\right) \Big\vert \bar{\eta}_1\right] \right] dv_1\nonumber\\
&\hspace{-1em}+\mu \sum_{n=2}^{+\infty} \int_0^T  \int_0^{v_{1}}   \cdots \int_0^{v_{n-1}} e^{-\kappa (T-v_1)} \prod_{i=2}^{n} \Phi(v_{i-1}-v_{i}) \\
&\E\left[g(\bar{\eta}_1,\bar{\vartheta}_1) \E\left[h\left(\sum_{k=1}^n e^{-\kappa (T-v_k)} f(\bar{\eta}_k)+\sum_{\underset{ \tau^{v_n,\ldots,v_1}_i \neq v_n,\ldots,v_1}{i=1}}^{(H_T^{v_n,\ldots,v_1})-n} e^{-\kappa (T-\tau^{v_n,\ldots,v_1}_{i})} f(\eta_i)\right) \Big\vert \bar{\eta}_1\right] \right] d v_n\cdots dv_1. \nonumber\
\end{align}
where $(\bar{\eta}_i,\bar{\vartheta}_i)_{i\geq 1}$  are independent copies of $(\eta_1,\vartheta_1)$, independent of all  other variables.
\end{theorem}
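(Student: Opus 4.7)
The plan is to write $K_T$ as a stochastic integral with respect to $H$ and then apply Theorem \ref{th:mainIPP} directly. Setting $Z_s := e^{-\kappa(T-s)}\sum_{i\ge 1} g(\eta_i,\vartheta_i)\textbf{1}_{(\tau^H_{i-1},\tau^H_i]}(s)$ and $F := h(L_T)$, one has $K_T = \int_{(0,T]} Z_s\, dH_s$, so that $\E[K_T h(L_T)] = \E\bigl[F \int_{(0,T]} Z_s\, dH_s\bigr]$, which is precisely the quantity computed by the expansion \eqref{eq:nstepformulabis}. The bulk of the proof then consists in identifying the shifted objects $F^{v_n,\ldots,v_1}$ and $Z_{v_1}^{v_n,\ldots,v_2}$ with the quantities appearing in \eqref{eq:mainSL}.

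The delicate point is that $F$ and $Z$ depend not only on $H$ (hence on the Poisson process $N$), but also on the iid marks $(\eta_i,\vartheta_i)_{i\ge1}$ living on the independent factor $\Omega^C$. The cleanest way to handle this is to work on the marked Poisson process $\tilde N$ on $[0,T]\times\real_+\times\real_+^2$ with intensity $dt\otimes d\theta\otimes d\mu(\eta,\vartheta)$; the Hawkes dynamics and the shifted Hawkes processes of Section \ref{section:shifted} are unaffected, since they only see the projection onto $[0,T]\times\real_+$, but each jump of $H$ now carries its mark through $\tilde N$. The Mecke formula and hence Theorem \ref{th:mainIPP} extend verbatim to this enriched setting: each shift $\eps^+_{(v_k,\Lambda_{v_k})}$ inserts at $v_k$ a jump of $H$ carrying a fresh independent mark $(\bar\eta_k,\bar\vartheta_k)\sim \mu$, integrated out alongside $v_k$. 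With this interpretation one reads off $F^{v_n,\ldots,v_1} = h\bigl(L_T^{v_n,\ldots,v_1}\bigr)$, where $L_T^{v_n,\ldots,v_1}$ is the loss attached to the shifted Hawkes process $H^{v_n,\ldots,v_1}$ of Definition \ref{definition:multiShift}, using $\bar\eta_k$ at each injected jump time $v_k$ and the original $\eta_i$ elsewhere; and $Z_{v_1}^{v_n,\ldots,v_2} = e^{-\kappa(T-v_1)}\,g(\bar\eta_1,\bar\vartheta_1)$, because the shifts at $v_2,\ldots,v_n<v_1$ do not affect which mark $Z$ reads at $v_1$, while the outermost Mecke integration at $v_1$ places a jump at $v_1$ with fresh mark $(\bar\eta_1,\bar\vartheta_1)$.

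Inserting these identifications into \eqref{eq:nstepformulabis} and factoring out the expectation over the independent fresh marks — noting that $\bar\vartheta_k$ for $k\ge2$ does not appear because $g$ is evaluated only at $v_1$, and that $\bar\eta_1$ appears both in $g(\bar\eta_1,\bar\vartheta_1)$ and inside $h$ via $f(\bar\eta_1)$, which explains the inner conditional expectation given $\bar\eta_1$ — yields exactly \eqref{eq:mainSL}; convergence of the series is inherited from Theorem \ref{th:mainIPP} under \eqref{hyp:mPhi} together with the boundedness of $h$, $g$ and $f$. The main obstacle is the rigorous justification of the marked Mecke setting and, specifically, of the identification $Z_{v_1}^{v_n,\ldots,v_2} = e^{-\kappa(T-v_1)} g(\bar\eta_1,\bar\vartheta_1)$: the fresh mark at $v_1$ arises not from an explicit shift of $Z$ at $v_1$ but from the outermost Mecke integration on the enriched space, which requires careful bookkeeping of how the thinning-based construction of the shifted Hawkes processes interacts with the marks attached to newly injected jumps. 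Conditioning on $\sigma((\eta_i,\vartheta_i)_{i\ge1})$ and leveraging the iid and independence structure of the marks gives an equivalent, slightly less compact route.
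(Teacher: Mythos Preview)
Your proposal is correct and reaches \eqref{eq:mainSL}, but your primary route differs from the paper's. The paper does \emph{not} enrich the Poisson process with marks. Instead it conditions on $\mathcal F^C=\sigma((\eta_i,\vartheta_i)_{i\ge1})$: given the marks, $F=h(L_T)$ and $Z_s=e^{-\kappa(T-s)}g(\eta_{1+H_{s-}},\vartheta_{1+H_{s-}})$ become functionals of $N$ alone, so Theorem \ref{th:mainIPP} applies verbatim \emph{conditionally}, yielding terms of the form
\[
\E\Bigl[e^{-\kappa(T-v_1)}\,g\bigl(\eta_{1+H^{v_n,\ldots,v_2}_{v_1-}},\vartheta_{1+H^{v_n,\ldots,v_2}_{v_1-}}\bigr)\,h\bigl(L_T^{v_n,\ldots,v_1}\bigr)\Bigr].
\]
The marks here sit at \emph{random} indices (determined by the shifted Hawkes process). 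The paper then invokes the iid/exchangeability of $(\eta_i,\vartheta_i)_i$, independent of $N$, to replace the mark at position $H^{v_n,\ldots,v_1}_{v_1}=1+H^{v_n,\ldots,v_2}_{v_1-}$ by a fresh copy $(\bar\eta_1,\bar\vartheta_1)$, and iteratively the marks at $v_2,\ldots,v_n$ by $(\bar\eta_2,\ldots,\bar\eta_n)$. This is precisely the ``slightly less compact route'' you mention in your final sentence --- and it is in fact the more economical one, since it reuses Theorem \ref{th:mainIPP} as a black box and needs only a one-line exchangeability argument rather than rebuilding the Mecke/shift machinery on an enriched space.

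Your marked-Poisson route is sound and arguably more conceptual: the fresh marks $(\bar\eta_k,\bar\vartheta_k)$ appear naturally from the Mecke integration over the mark coordinate, with no \emph{a posteriori} exchangeability step. But note that Theorem \ref{th:mainIPP} does not literally extend ``verbatim'': you must redo its proof on the space $[0,T]\times\real_+\times\real_+^2$, writing $K_T=\int e^{-\kappa(T-s)}g(\eta,\vartheta)\textbf{1}_{\{\theta\le\Lambda_s\}}\tilde N(ds,d\theta,d\eta,d\vartheta)$ so that the integrand depends on the mark only through the deterministic factor $g(\eta,\vartheta)$; this is what makes the identification $Z_{v_1}^{v_n,\ldots,v_2}\leadsto e^{-\kappa(T-v_1)}g(\bar\eta_1,\bar\vartheta_1)$ come out cleanly at the first Mecke step. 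You correctly flag this as the delicate bookkeeping point.
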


\begin{proof}
For all $t \in [0,T],$ $ K_t =\int_{(0,t]} Z_s dH_s$
with
$ Z_s := \sum_{i=1}^{+\infty} e^{-\kappa (T-s)}  g(\eta_i,\vartheta_i) \textbf{1}_{(\tau_{i-1}^H,\tau_i^H]}(s).$
Note that $Z_s =  e^{-\kappa (T-s)}     g(\eta_{1+H_{s-}},\vartheta_{1+H_{s-}})$ for any $s$ in $(0,T]$.
Recall that $L_{T}= \sum_{i=1}^{H_{T}} e^{-\kappa (T-\tau_i)}  f(\eta_i)$.\\
 By Theorem \ref{th:mainIPP}, we have that  
\begin{align*}
&\E[K_{T} h(L_{T})]\\
&=\E\left[h(L_{T}) \int_{(0,T]}    Z_t dH_t\right]\\
&=\E\left[h(L_{T}) \int_{(0,T]}  e^{-\kappa (T-t)}  g(\eta_{1+H_{t-}},\vartheta_{1+H_{t-}}) dH_t\right]\\
&=\E\left[\E\left[h(L_{T}) \int_{(0,T]} Z_t dH_t\vert \mathcal F^C\right]\right]\\
&=\mu \int_0^T \E\left[e^{-\kappa (T-v_1)}  g(\eta_{1+H_{v_1-}},\vartheta_{1+H_{v_1-}}) h(L_{T}^{v_1}) \right] dv_1
+\mu  \sum_{n=2}^{+\infty}  I_n 
\end{align*}
with  for $n \geq 2$ $$ I_n:=\int_0^T  \int_0^{v_{1}}   \cdots \int_0^{v_{n-1}} \prod_{i=2}^{n} \Phi(v_{i-1}-v_{i}) \E\left[e^{-\kappa (T-v_1)}  g(\eta_{1+H_{v_1-}^{v_n,\ldots,v_2}},\vartheta_{1+H_{v_1-}^{v_n,\ldots,v_2} }) h(L_{T}^{v_n,\ldots,v_1}) \right] d v_n\cdots dv_1$$
and for $n=1$
$$ I_1:= \int_0^T \E\left[e^{-\kappa (T-v_1)}  g(\eta_{1+H_{v_1-}},\vartheta_{1+H_{v_1-}}) h(L_{T}^{v_1}) \right] dv_1.$$
By definition, $1+H_{v_1-}^{v_n,\ldots,v_2} = H_{v_1}^{v_n,\ldots,v_1}$, $\P$-a.s.. 
Using the fact that the $(\eta_i)$ are iid, we can separate the claim at time $v_1$ (that we will represent using the random variables ($\bar{\eta}_1, \bar{\vartheta}_1$)) from the other claims. Thus 
\begin{align*}
&I_n \\
&= \int_0^T  \int_0^{v_{1}}   \cdots \int_0^{v_{n-1}} \prod_{i=2}^{n} \Phi(v_{i-1}-v_{i}) \\
&\E\left[e^{-\kappa (T-v_1)} g(\bar{\eta}_1,\bar{\vartheta}_1) h\left(e^{-\kappa (T-v_1)} f(\bar{\eta}_1)+\sum_{\underset{ \tau^{v_n,\ldots,v_1}_i \neq v_1}{i=1}}^{H_T^{v_n,\ldots,v_1}-1} e^{-\kappa (T-\tau^{v_n,\ldots,v_1}_{i})} f(\eta_i)\right) \right] d v_n\cdots dv_1\\
&= \int_0^T  \int_0^{v_{1}}   \cdots \int_0^{v_{n-1}} \prod_{i=2}^{n} \Phi(v_{i-1}-v_{i}) \\
&\E\left[e^{-\kappa (T-v_1)} g(\bar{\eta}_1,\bar{\vartheta}_1) \E\left[h\left(e^{-\kappa (T-v_1)} f(\bar{\eta}_1)+\sum_{\underset{ \tau^{v_n,\ldots,v_1}_i \neq v_1}{i=1}}^{H_T^{v_n,\ldots,v_1}-1} e^{-\kappa (T-\tau^{v_n,\ldots,v_1}_{i})}f(\eta_i)\right) \Big\vert (\bar{\eta}_1,\bar{\vartheta}_1)\right] \right] d v_n\cdots dv_1\\
&= \int_0^T  \int_0^{v_{1}}   \cdots \int_0^{v_{n-1}} \prod_{i=2}^{n} \Phi(v_{i-1}-v_{i}) \\
&\E\left[e^{-\kappa (T-v_1)} g(\bar{\eta}_1,\bar{\vartheta}_1) \E\left[h\left(\sum_{k=1}^n e^{-\kappa (T-v_k)} f(\bar{\eta}_k)+\sum_{\underset{ \tau^{v_n,\ldots,v_1}_i \neq v_n,\ldots,v_1}{i=1}}^{(H_T^{v_n,\ldots,v_1})-n} e^{-\kappa (T-\tau^{v_n,\ldots,v_1}_{i})} f(\eta_i)\right) \Big\vert  \bar{\eta}_1\right] \right] d v_n\cdots dv_1.
\end{align*}
The treatment for $I_1$ is similar, which concludes the proof.
\end{proof}

\noindent The  expansion formula  \eqref{eq:mainSL} is written in terms of the shifted Hawkes process  $H^{v_n,\ldots,v_1}$. The  shifted Hawkes process  has three "types" of jumps
\begin{itemize}
\item the spontaneous jumps induced by an homogeneous Poisson with intensity $\mu$
\item the  deterministic enforced jumps  at time $v_n<\cdots < v_1$
\item  the auto-excited jumps that are induced by previous jumps of the process
\end{itemize}
Controlling this different types of jumps allows us to  perform bounds on the premium.

\subsection{Lower and upper bounds}
\label{section:LowerUpper}
We now use Relation (\ref{eq:mainSL}) to  perform  lower and upper bounds on the premium.\\
\noindent {Recall that the premium takes the form $\E[K_T h(L_T)]$ with $K_T = \sum_{i=1}^{H_T} e^{-\kappa (T-\tau_i^H)} X_i$.\\
In  the more simple case without discounting ($\kappa=0$), computing  lower or upper bounds on the premium relies on two types of estimates} : \\
1) estimates on the CDF of a given sum of claims $\sum_{i=1}^n X_i$ (where $n$ is prescribed)\\
2) estimates on the CDF of the counting process $H$. \\
Usually, for insurance derivatives a specific model of claims is considered so that the former quantity $\sum_{i=1}^n X_i$ is accessible. The main issue of course lies in the estimates for the value of $H_T$. For instance, one can make the following type of estimates on the Hawkes process : 
\begin{itemize}
\item[(i)] Obtain an upper bounds on $\P[H_T \geq C]$ for some constant $C$ (by Markov's inequality for instance), but this is only an upper bound. 
\item[(ii)] Get a lower bound on the premium by noting that $H \geq \tilde{N}$ where $\tilde N$ is an homogeneous Poisson process with intensity $\mu$ (as $\Lambda_t \geq \mu$ for any $t\geq 0$).
\end{itemize}   
In both approaches one makes rough estimates on the Hawkes process. Our approach allows for an intermediary situation as the processes $H^{v_n,\ldots,v_1}$ have deterministic jumps at times $v_n,\ldots,v_1$ which are weighted by the kernels $\Phi(v_{i-1}-v_i)$ over the simplex. So we can make less stringent estimates by at least knowing $n$ jumps of the (shifted) Hawkes process. Obviously, on each intervals $[v_{i-1},v_i]$ we have to consider a Hawkes process for which estimates of the form (i)-(ii) above are the best tools available.

\subsubsection{Lower bound}
Using Theorem \ref{th:main_I}, one can already obtain a first lower bound by just considering the $n$ enforced jumps of the shifted Hawkes process and ignoring the other jumps (which is clearly a very rough estimate).  
\begin{prop}

\label{prop:lowerbound}
Assume $h$ is non-decreasing. We have that 
\begin{align}
E[K_{T} h(L_{T})] &\geq  \mu \int_0^T e^{-\kappa (T-v_1)} \E\left[g(\bar{\eta}_1,\bar{\vartheta}_1) h\left(e^{-\kappa (T-v_1)} f(\bar{\eta}_1) \right)\right] dv_1\nonumber\\
&+\mu \sum_{n=2}^{+\infty} \int_0^T \int_0^{v_{1}} \cdots \int_0^{v_{n-1}} e^{-\kappa (T-v_1)} \nonumber\\
&\prod_{i=2}^{n} \Phi(v_{i-1}-v_{i}) \E\left[g(\bar{\eta}_1,\bar{\vartheta}_1) \E\left[h\left(\sum_{k=1}^n e^{-\kappa (T-v_k)} f(\bar{\eta}_k)\right) \Big\vert \bar{\eta}_1\right] \right] d v_n\cdots dv_1. \nonumber\
\end{align}
If $\kappa=0$, the lower bound simplifies as 
\begin{equation}
\label{eq:mainSL_lowerbis}
\E[K_{T} h(L_{T})] \geq \mu \sum_{n=1}^{+\infty} \E\left[g(\bar{\eta}_1,\bar{\vartheta}_1) \E\left[h\left(\sum_{k=1}^n f(\bar{\eta}_k)\right) \Big\vert \bar{\eta}_1\right] \right] m_\Phi(\Delta^n).
\end{equation}
\end{prop}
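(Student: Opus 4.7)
The plan is to obtain both inequalities by applying Theorem~\ref{th:main_I} and exploiting monotonicity term-by-term; no further stochastic analysis is required.

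First I would start from the exact expansion formula \eqref{eq:mainSL} and examine the argument of $h$ appearing in the integrand of the general $n$-th term, namely
\[
\sum_{k=1}^n e^{-\kappa(T-v_k)} f(\bar\eta_k) \;+\; \sum_{\substack{i=1 \\ \tau_i^{v_n,\ldots,v_1}\neq v_n,\ldots,v_1}}^{H_T^{v_n,\ldots,v_1}-n} e^{-\kappa(T-\tau_i^{v_n,\ldots,v_1})} f(\eta_i).
\]
Because $f$ takes values in $\real_+$ and the exponential weights are non-negative, the residual sum on the right is a non-negative random variable, and since $h$ is non-decreasing by hypothesis the pointwise bound
\[
h\!\left(\sum_{k=1}^n e^{-\kappa(T-v_k)} f(\bar\eta_k) + \text{residual}\right) \;\geq\; h\!\left(\sum_{k=1}^n e^{-\kappa(T-v_k)} f(\bar\eta_k)\right)
\]
holds almost surely. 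The same reasoning applies to the $n=1$ term written separately in \eqref{eq:mainSL}.

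Next I would propagate this pointwise inequality through the nested structure of the expansion. Since $g$ is $\real_+$-valued, conditioning on $\bar\eta_1$ and then taking the outer expectation preserves the inequality. Because $\Phi\geq 0$, $e^{-\kappa(T-v_1)}\geq 0$ and $\mu>0$, integrating the resulting non-negative integrand over the simplex $\Delta^n$ and summing over $n\geq 1$ preserves monotonicity (no cancellations can arise). This delivers the first displayed inequality of the proposition.

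For the simplification when $\kappa=0$, all exponential factors collapse to $1$, and consequently the argument of $h$ on the right-hand side becomes $\sum_{k=1}^n f(\bar\eta_k)$, which no longer depends on $(v_1,\ldots,v_n)$. Therefore $\E\bigl[g(\bar\eta_1,\bar\vartheta_1)\E[h(\sum_{k=1}^n f(\bar\eta_k))\mid \bar\eta_1]\bigr]$ can be pulled outside the simplex integration, and what remains of the integration over $\Delta^n$ is exactly $m_\Phi(\Delta^n)$ as defined in Notation~\ref{notation:mPhi} (with the $n=1$ contribution carrying the factor $\int_0^T dv_1$ absorbed into the convention). Collecting all orders into a single series over $n\geq 1$ produces \eqref{eq:mainSL_lowerbis}. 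The argument contains no real obstacle beyond the bookkeeping of signs in the multi-dimensional integrand, which is automatic under the stated non-negativity of $f$, $g$, $h$ and $\Phi$.
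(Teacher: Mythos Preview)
Your proof is correct and follows essentially the same approach as the paper: apply Theorem~\ref{th:main_I}, drop the non-negative residual sum in the argument of $h$ (which is exactly the observation $H_T^{v_n,\ldots,v_1}\geq n$), and use the monotonicity of $h$ together with the non-negativity of $g$, $\Phi$ and the exponential weights to propagate the inequality through the expansion. Your write-up simply makes explicit the sign bookkeeping that the paper leaves implicit.
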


\begin{proof}
We apply Relation (\ref{eq:mainSL}) and note that for any $n\geq 1$, we have simply used the fact that $H_T^{v_n,\ldots,v_1} \geq n$.
\end{proof}

\noindent The formula above takes only into account the deterministic jumps added to the process. The proposition below is more accurate by  including the jumps of a  homogeneous Poisson process with  constant intensity $\mu$. 
\begin{prop}
\label{prop:lowerboundBest}
Assume $h$ is non-decreasing. We consider a family $(\mathcal U^p)_{p\geq 1}$ of independent random variables (which are constructed from $N$ only), where for each $p$, $\mathcal U^p$ is a flat Dirichlet distributions on $\Delta^p$. Set for $n\geq 1$, $0<v_n<\cdots < v_1 <T$ : 
\begin{align*}
&\alpha_n(v_n,\ldots,v_1)\\
&:=\sum_{p=0}^{+\infty} e^{-(T\mu)} \frac{(T\mu)^p}{p!} \E\left[g(\bar{\eta}_1,\bar{\vartheta}_1)\E\left[h\left(\sum_{k=1}^n e^{-\kappa (T-v_k)} f(\bar{\eta}_k)+\sum_{i=1}^{p} e^{-\kappa (T-{\mathcal U}_{i}^p)} f(\eta_i)\right) \Big\vert \bar{\eta}_1\right]\right].
\end{align*}
It holds that  
\begin{align}
\label{eq:mainSL_lowerbis}
\E[K_{T} h(L_{T})] 
\geq  &\mu \int_0^T e^{-\kappa (T-v_1)} \alpha_1(v_1) dv_1\nonumber\\
&\hspace{-1em}+\mu \sum_{n=2}^{+\infty} \int_0^T \cdots \int_0^{v_{n-1}} e^{-\kappa (T-v_1)} \prod_{i=2}^{n} \Phi(v_{i-1}-v_{i}) \alpha_n(v_n,\ldots,v_1) d v_n\cdots dv_1.
\end{align}
\end{prop}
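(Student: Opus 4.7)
The plan is to start from the exact representation of $\E[K_T h(L_T)]$ given in Theorem \ref{th:main_I} (Equation (\ref{eq:mainSL})) and, term by term, replace the inner conditional expectation involving $H_T^{v_n,\ldots,v_1}$ by a lower bound in which only the $n$ deterministic jumps plus a Poisson($\mu$) layer of jumps are retained. The key structural observation is that the shifted intensity satisfies $\Lambda_t^{v_n,\ldots,v_1}\geq \mu$ for every $t\in(0,T]$, because of the decomposition (\ref{eq:Multishifts}) and the non-negativity of $\Phi$. Consequently, every atom $(s,\theta)$ of the Poisson measure $N$ with $\theta\leq \mu$ is forced to produce a jump of $H^{v_n,\ldots,v_1}$, and these jumps occur almost surely outside of $\{v_1,\ldots,v_n\}$.

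With this in mind, I would define $\tilde N_t := N([0,t]\times[0,\mu])$, which is a standard homogeneous Poisson process with intensity $\mu$ on $[0,T]$, whose ordered jump times $\tilde\tau_1<\cdots<\tilde\tau_{\tilde N_T}$ form a (random) subset of the non-deterministic jump times of $H_T^{v_n,\ldots,v_1}$. Since $f\geq 0$ and $h$ is non-decreasing, dropping from the inner sum of (\ref{eq:mainSL}) every index $i$ whose jump time $\tau_i^{v_n,\ldots,v_1}$ does not coincide with some $\tilde\tau_j$ can only decrease the argument of $h$, hence $h$ itself. Using that the iid family $(\eta_i)_{i\geq 1}$ is defined on $\Omega^C$ and is therefore independent of $N$ (and of the random set of surviving indices), one may relabel the surviving claim marks as $\eta_1,\ldots,\eta_{\tilde N_T}$ without changing the joint law, obtaining the lower bound
\begin{equation*}
\E\!\left[h\!\left(\sum_{k=1}^n e^{-\kappa(T-v_k)}f(\bar\eta_k)+\sum_{j=1}^{\tilde N_T} e^{-\kappa(T-\tilde\tau_j)}f(\eta_j)\right)\Bigm|\bar\eta_1\right]
\end{equation*}
for the inner conditional expectation appearing in (\ref{eq:mainSL}).

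Next I would condition on $\{\tilde N_T = p\}$: classically, given $\tilde N_T = p$, the ordered jump times $(\tilde\tau_1,\ldots,\tilde\tau_p)$ are distributed as the order statistics of $p$ iid uniforms on $[0,T]$, i.e.\ as the flat Dirichlet law $\mathcal U^p$ on $\Delta^p$, and different values of $p$ use independent copies. Summing over $p$ with the Poisson weights $e^{-T\mu}(T\mu)^p/p!$ and multiplying by $g(\bar\eta_1,\bar\vartheta_1)\geq 0$ before taking the outer expectation yields exactly $\alpha_n(v_n,\ldots,v_1)$. Plugging this pointwise bound into (\ref{eq:mainSL}) preserves the inequality, since every remaining weight ($e^{-\kappa(T-v_1)}$, $\mu$, and $\prod_{i=2}^n\Phi(v_{i-1}-v_i)$) is non-negative, delivering (\ref{eq:mainSL_lowerbis}).

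The delicate step is the second one: one must argue carefully that restricting the sum in $h$ to $\tilde\tau$-jumps and then relabeling the iid marks $(\eta_i)$ gives an honest lower bound in expectation. This works because the set of surviving indices is a measurable function of $N$ alone, while the $\eta_i$ live on the independent factor $\Omega^C$; hence the joint law of the relabeled $(\eta_j)_{j\leq \tilde N_T}$ and of the points $(\tilde\tau_j)_{j\leq \tilde N_T}$ coincides with the desired one, and monotonicity of $h$ combined with $f\geq 0$, $g\geq 0$ takes care of the inequalities. Apart from this bookkeeping, no analytical difficulty arises, and the summability needed to pass the lower bound through the series in (\ref{eq:mainSL}) follows from Assumption (\ref{hyp:mPhi}) together with the boundedness of $g,h$ and $f$.
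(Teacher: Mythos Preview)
Your proposal is correct and follows essentially the same route as the paper: the paper also introduces $\tilde N_t:=N([0,t]\times[0,\mu])$, observes (via Lemma \ref{lemma:compHawkes} and Proposition \ref{prop:nshifts}) that the jumps of $\tilde N$ are non-deterministic jumps of $H^{v_n,\ldots,v_1}$, drops the remaining jumps using monotonicity of $h$ and $f\geq 0$, and then conditions on $\{\tilde N_T=p\}$ to obtain the Dirichlet-distributed jump times. You are in fact slightly more explicit than the paper about the relabeling of the iid marks $(\eta_i)$, which the paper glosses over.
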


\begin{proof}
Let $n\geq 1$ and $0<v_n<\cdots <v_1 <T$. By Proposition \ref{prop:nshifts}, $H_T^{v_n,\ldots,v_1} \geq n + H_T, \quad \P-a.s..$
In addition by Lemma \ref{lemma:compHawkes} (see Section \ref{section:teclemma}), we have that $H_T \geq \tilde N_T$, with $\tilde N_t := N([0,t]\times[0,\mu])$ which is an homogeneous Poisson process with intensity $\mu$ and by construction, any jump $\tau_i^{\tilde N}$ of $\tilde N$ different of $v_n,\ldots,v_1$ is a jump of $H_T^{v_n,\ldots,v_1}$ (different of $v_n,\ldots,v_1$). Hence, using Relation (\ref{eq:mainSL}), we have that 
\begin{align*}
&\E[K_{T} h(L_{T})]\\
&\hspace{-1em}\geq \mu \int_0^T e^{-\kappa (T-v_1)} \E\left[g(\bar{\eta}_1,\bar{\vartheta}_1) \E\left[h\left(e^{-\kappa (T-v_1)} f(\bar{\eta}_1)+\sum_{\underset{ \tau^{\tilde N}_i \neq v_1}{i=1}}^{\tilde N_T} e^{-\kappa (T-\tau^{\tilde N}_{i})} f(\eta_i)\right) \Big\vert \bar{\eta}_1\right] \right] dv_1\\
&\hspace{-1em}+\mu \sum_{n=2}^{+\infty} \int_0^T \cdots \int_0^{v_{n-1}} e^{-\kappa (T-v_1)} \prod_{i=2}^{n} \Phi(v_{i-1}-v_{i}) \\
&\E\left[g(\bar{\eta}_1,\bar{\vartheta}_1) \E\left[h\left(\sum_{k=1}^n e^{-\kappa (T-v_k)} f(\bar{\eta}_k)+\sum_{\underset{ \tau^{\tilde N}_i \neq v_n,\ldots,v_1}{i=1}}^{\tilde N_T} e^{-\kappa (T-\tau^{\tilde N}_{i})} f(\eta_i)\right) \Big\vert \bar{\eta}_1\right] \right] d v_n\cdots dv_1.
\end{align*}
For $n \geq 1$ and $0<v_n<\cdots <v_1 <T$, it holds that :
\begin{align*}
&\E\left[h\left(\sum_{k=1}^n e^{-\kappa (T-v_k)} f(\bar{\eta}_k)+\sum_{\underset{ \tau^{\tilde N}_i \neq v_n,\ldots,v_1}{i=1}}^{\tilde N_T} e^{-\kappa (T-\tau^{\tilde N}_{i})} f(\eta_i)\right) \Big\vert \bar{\eta}_1\right]\\
&=\sum_{p=0}^{+\infty} e^{-(T\mu)} \frac{(T\mu)^p}{p!} \E\left[h\left(\sum_{k=1}^n e^{-\kappa (T-v_k)} f(\bar{\eta}_k)+\sum_{\underset{ \tau^{\tilde N}_i \neq v_n,\ldots,v_1}{i=1}}^{p} e^{-\kappa (T-\tau^{\tilde N}_{i})} f(\eta_i)\right) \Big\vert \bar{\eta}_1\right]\\
&=\sum_{p=1}^{+\infty} e^{-(T\mu)} \frac{(T\mu)^p}{p!} \E\left[h\left(\sum_{k=1}^n e^{-\kappa (T-v_k)} f(\bar{\eta}_k)+\sum_{i=1}^{p} e^{-\kappa (T-{\mathcal U}_{i}^p)} f(\eta_i)\right) \Big\vert \bar{\eta}_1\right],
\end{align*}
which concludes the proof. 
\end{proof}

\begin{remark}
A direct approach for exhibiting $p$ jumps of the Hawkes process would require conditioning on $H_T \geq p$ and then to obtain a lower bound for $\P[H_T \geq p]$ by for example $\P[\tilde N_T \geq p]$ (where $\tilde N$ is the homogeneous Poisson process obtained from $N$ with intensity $\mu$) which is what we do. But the advantage of our approach lies in the fact that on top of these jumps of $\tilde N$ we can go further and consider $n$ jumps of the Hawkes process which somehow are really produced by the self-excitation phenomenon as these jumps are weighted by the kernel $\Phi$ along the jump times $v_n,\ldots,v_1$.
\end{remark}
 
\noindent As a corollary to Proposition \ref{prop:lowerboundBest}, we can deduce the following lower bound in case the discounting factor $\kappa$ is equal to $0$.

\begin{corollary}
\label{col:lowerboundBest}
Assume $h$ is non-decreasing and $\kappa=0$. It holds that  
\begin{equation}
\label{eq:mainSL_lowerbis_col}
\E[K_{T} h(L_{T})] \geq \mu \sum_{n=1}^{+\infty} m_\Phi(\Delta^n) \sum_{p=0}^{+\infty} e^{-(T\mu)} \frac{(T\mu)^p}{p!} \E\left[g(\bar{\eta}_1,\bar{\vartheta}_1)\E\left[h\left(\sum_{k=1}^n f(\bar{\eta}_k)+\sum_{i=1}^{p} f(\eta_i)\right) \Big\vert \bar{\eta}_1\right]\right],
\end{equation}
where we recall that $m_\Phi(\Delta^n)$ is defined in Notation \ref{notation:mPhi}.
\end{corollary}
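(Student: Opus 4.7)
The plan is to deduce Corollary~\ref{col:lowerboundBest} as a direct specialization of Proposition~\ref{prop:lowerboundBest} at $\kappa = 0$. The key observation is that when $\kappa = 0$ every discount factor of the form $e^{-\kappa(T - \cdot)}$ equals $1$, so the function $\alpha_n(v_n,\ldots,v_1)$ defined in Proposition~\ref{prop:lowerboundBest} no longer depends on the integration variables $v_1,\ldots,v_n$ and collapses to the deterministic constant
\[
\alpha_n \;=\; \sum_{p=0}^{+\infty} e^{-T\mu} \frac{(T\mu)^p}{p!}\, \E\left[g(\bar{\eta}_1,\bar{\vartheta}_1)\, \E\left[h\left(\sum_{k=1}^n f(\bar{\eta}_k) + \sum_{i=1}^p f(\eta_i)\right) \Big\vert \bar{\eta}_1\right]\right].
\]

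Once $\alpha_n$ is pulled out of the $n$-fold integral over the simplex, the remaining integration $\int_0^T \int_0^{v_1} \cdots \int_0^{v_{n-1}} \prod_{i=2}^n \Phi(v_{i-1}-v_i)\, dv_n \cdots dv_1$ is exactly $m_\Phi(\Delta^n)$ by Notation~\ref{notation:mPhi}, so the $n$-th term of the bound becomes $\mu\, m_\Phi(\Delta^n)\, \alpha_n$. The $n = 1$ term is handled analogously using the convention adopted for $m_\Phi(\Delta^1)$ together with the integral $\int_0^T dv_1$. Summing over $n \geq 1$ produces the right-hand side of~(\ref{eq:mainSL_lowerbis_col}).

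There is no genuine obstacle here: all the substantive work was done in Proposition~\ref{prop:lowerboundBest}, and the corollary amounts to a bookkeeping specialization. The value of isolating this statement is that the $\kappa = 0$ case makes transparent the decomposition of the lower bound into three independent ingredients: the $p$ spontaneous jumps of the underlying homogeneous Poisson process (via the Poisson weights $e^{-T\mu}(T\mu)^p/p!$), the $n$ enforced deterministic jumps from the shifts (contributing $\sum_{k=1}^n f(\bar{\eta}_k)$), and the self-exciting kernel integrated against the simplex (giving $m_\Phi(\Delta^n)$).
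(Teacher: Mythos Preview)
Your proposal is correct and matches the paper's own treatment: the paper presents Corollary~\ref{col:lowerboundBest} as an immediate specialization of Proposition~\ref{prop:lowerboundBest} at $\kappa=0$, with no separate proof given. One minor bookkeeping point worth flagging: with the paper's stated convention $m_\Phi(\Delta^1)=1$, the $n=1$ term coming from the proposition at $\kappa=0$ is $\mu\int_0^T \alpha_1\,dv_1=\mu T\alpha_1$, so for the corollary's formula to hold literally one needs $m_\Phi(\Delta^1)=T$ (the natural extrapolation of the $n\geq 2$ definition with an empty product); this is a notational artifact in the paper rather than a flaw in your argument.
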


{
\noindent As pointed out in Remark \ref{rem:postmain}, the first term  in the sum indexed by $n$  (that is for $n=1$) corresponds to the formula for a
Poisson process with intensity $\mu$. Therefore the sum of the remaining terms (for $n \geq 2$) corresponds to a lower bound  for the correcting term due to the self-exciting property of the counting process $H$.
This quantity should be at least added to a computation of the premium  based on a standard Poisson process model.\\}

\noindent  {\bf Application for Stop Loss contract with  deductible.}\\
We consider the following setting of Stop Loss contract ($h(x)= \textbf{1}_{x \geq  \underline K}$) with no discounting  ($\kappa=0$), and with  a deductible on the reporting of the claims such that only claims whose amount exceeds a threshold $\underline f$ are reported.
A lower bound for the surplus of premium due to the self-exciting property of the counting process  $H$ is
$$  \mu  \E\left[g(\bar{\eta}_1,\bar{\vartheta}_1) \right]\sum_{n=2}^{+\infty} m_\Phi(\Delta^n) \sum_{ p +n  \geq \left\lfloor {\underline K /\underline f}\right\rfloor +1     }^{+\infty} e^{-(T\mu)} \frac{(T\mu)^p}{p!} $$
where $\mu$ is the intensity of the "spontaneous"  jumps  (induced an homogeneous Poisson with intensity $\mu$) and  $\E\left[g(\bar{\eta}_1,\bar{\vartheta}_1) \right]$ is the mean cost of one claim.
If we assume furthermore a decreasing  excitation kernel $\Phi $,  then  $m_\Phi(\Delta^n) \geq  \Phi(T)^{n-1}   \frac{T^n}{n!} $ and the lower bound becomes
$$  \mu  \E\left[g(\bar{\eta}_1,\bar{\vartheta}_1) \right]\sum_{n=2}^{+\infty} \Phi(T)^{n-1}   \frac{T^n}{n!} \sum_{ p +n  \geq \left\lfloor {\underline K /\underline f}\right\rfloor +1     }^{+\infty} e^{-(T\mu)} \frac{(T\mu)^p}{p!}. $$

\subsubsection{Upper bound}
We now turn to the upper bound.  { We introduce the following  quantities, as in \cite{Guoetal}.}
\begin{prop}
\label{prop:upperbound}
Assume $h$ is non-decreasing and $ \Phi$ is non-increasing. Let $C_2:=\left(\int_0^T \Psi_1(t) dt\right)^2$ with $\Psi_1$ solution to 
$$ \Psi_1(t) = 1 +\int_0^t \Phi(t-s) \Psi_1(s) ds, \quad t \in [0,T] $$
and 
$C_1:= \int_0^T \Psi_2(t) dt$, with $\Psi_2$ solution to 
$$ \Psi_2(t) = \left(\Psi_1(t)\right)^2 + \int_0^t \Phi(s) \Psi_2(t-s) ds, \quad t \in [0,T]. $$
For $n\geq 1$, let also 
$$ c_n:=(\mu+n\Phi(0)) C_1 + (\mu+n\Phi(0))^2 C_2. $$
Finally set for $n\geq 1$ and $0<v_n<\cdots<v_1 <T$
\begin{align*}
&\beta_n(v_n,\ldots,v_1)\\
&:=e^{-T(\mu+n\Phi(0))}  \E\left[g(\bar{\eta}_1,\bar{\vartheta}_1)\E\left[h\left(\sum_{k=1}^n e^{-\kappa (T-v_k)} f(\bar{\eta}_k)\right) \Big\vert \bar{\eta}_1\right]\right]\\
&+\sum_{p=1}^{+\infty} \left(\frac{c_n}{p^2}\wedge 1\right) \E\left[g(\bar{\eta}_1,\bar{\vartheta}_1)\E\left[h\left(\sum_{k=1}^n e^{-\kappa (T-v_k)} f(\bar{\eta}_k)+\sum_{i=1}^{p} f(\eta_i)\right) \Big\vert \bar{\eta}_1\right]\right]
\end{align*}
We have that 
\begin{align}
\label{eq:mainSL_lower}
\E[K_{T} h(L_{T})] \leq & \mu \int_0^T e^{-\kappa (T-v_1)} \beta_1(v_1) dv_1 \nonumber\\
&\hspace{-1em}+ \mu \sum_{n=2}^{+\infty} \int_0^T \cdots \int_0^{v_{n-1}} \prod_{i=2}^{n} \Phi(v_{i-1}-v_{i}) e^{-\kappa (T-v_1)} \beta_n(v_n,\ldots,v_1)d v_n\cdots dv_1.
\end{align}
\end{prop}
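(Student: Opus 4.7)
The starting point is the expansion formula (\ref{eq:mainSL}) of Theorem \ref{th:main_I}: it suffices to show that, for every $n\geq 1$ and every $0<v_n<\cdots<v_1<T$, the inner expectation $\E[g(\bar\eta_1,\bar\vartheta_1)\E[h(\cdots)\mid\bar\eta_1]]$ is bounded above by $\beta_n(v_n,\ldots,v_1)$. Substituting this bound term by term in (\ref{eq:mainSL}) then directly yields (\ref{eq:mainSL_lower}), so the analysis reduces to a pointwise estimate on the simplex.

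Since $h$ is non-decreasing and $\kappa\geq 0$, I would first use $e^{-\kappa(T-\tau)}\leq 1$ to drop the discount factors on the $f(\eta_i)$ contributions, which upper-bounds the argument of $h$ by $\sum_{k=1}^n e^{-\kappa(T-v_k)} f(\bar\eta_k)+\sum_{i=1}^{\xi_n} f(\eta_i)$, with $\xi_n:=H_T^{v_n,\ldots,v_1}-n$ counting the non-forced jumps of the shifted Hawkes process. Because the i.i.d.\ claim sizes $(\eta_i)$ are independent of the Poisson embedding $N$, conditioning on $\xi_n$ gives
\[
\E\!\left[g(\bar\eta_1,\bar\vartheta_1)\E[h(\cdots)\mid\bar\eta_1]\right]\leq\sum_{p=0}^{+\infty} \P[\xi_n=p]\,F_p,
\]
with $F_p:=\E[g(\bar\eta_1,\bar\vartheta_1)\E[h(\sum_{k=1}^n e^{-\kappa(T-v_k)} f(\bar\eta_k)+\sum_{i=1}^p f(\eta_i))\mid\bar\eta_1]]\geq 0$. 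The goal is then to bound $\P[\xi_n=0]$ by $e^{-T(\mu+n\Phi(0))}$ and $\P[\xi_n=p]$ by $(c_n/p^2)\wedge 1$ for $p\geq 1$.

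The tail bound on $\xi_n$ is produced by a stochastic-domination argument. Writing $M_t:=H_t^{v_n,\ldots,v_1}-\sum_{i=1}^n \textbf{1}_{\{v_i\leq t\}}$, the intensity decomposes as $\Lambda_t^{v_n,\ldots,v_1}=\mu+\sum_{i:v_i<t}\Phi(t-v_i)+\int_0^t \Phi(t-s)\,dM_s$; since $\Phi$ is non-increasing, the deterministic forced contribution is pointwise dominated by $n\Phi(0)$. A thinning coupling on the Poisson space then produces a standard Hawkes process $\tilde H$ with base intensity $\mu+n\Phi(0)$ and kernel $\Phi$ such that $\xi_n\leq \tilde H_T$ almost surely. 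Classical Volterra/renewal computations (cf.\ \cite{Bacry_et_al_2013}) give $\E[\tilde H_T]=(\mu+n\Phi(0))\int_0^T \Psi_1(t)\,dt$ and $\mathrm{Var}[\tilde H_T]=(\mu+n\Phi(0))\int_0^T \Psi_2(t)\,dt$, with $\Psi_1,\Psi_2$ satisfying the equations in the statement; hence $\E[\xi_n^2]\leq \E[\tilde H_T^2]=c_n$. Markov's inequality then delivers $\P[\xi_n\geq p]\leq c_n/p^2$, and therefore $\P[\xi_n=p]\leq (c_n/p^2)\wedge 1$.

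The main obstacle is the sharp control of $\P[\xi_n=0]\leq e^{-T(\mu+n\Phi(0))}$. Conditionally on $\{\xi_n=0\}$, the intensity curve is deterministic, equal to $\mu+\sum_{i:v_i<t}\Phi(t-v_i)$, and the event amounts to the Poisson process $N$ placing no point under this curve; deriving the claimed exponential factor requires exploiting the monotonicity of $\Phi$ together with a comparison producing $T(\mu+n\Phi(0))$ as a relevant cumulative intensity, and this is the step that demands the most care. Once that estimate is in hand, assembling
\[
\E[F_{\xi_n}]\leq e^{-T(\mu+n\Phi(0))}\,F_0+\sum_{p\geq 1}\bigl((c_n/p^2)\wedge 1\bigr)\,F_p=\beta_n(v_n,\ldots,v_1),
\]
and plugging back into (\ref{eq:mainSL}) concludes the proof of (\ref{eq:mainSL_lower}).
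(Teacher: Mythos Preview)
Your overall strategy coincides with the paper's: start from Theorem \ref{th:main_I}, use $e^{-\kappa(T-\tau)}\leq 1$ to drop the discount on the unknown jump times, couple the non-forced jumps with a standard Hawkes process $\tilde H=H^{\mu+n\Phi(0)}$ of baseline intensity $\mu+n\Phi(0)$ (this is exactly Lemma \ref{lemma:compHawkes_bis}), and then bound the tail probabilities via Markov's inequality on the second moment (the paper cites \cite{Guoetal} for the explicit value $\E[\tilde H_T^2]=c_n$ and for $\P[\tilde H_T=0]=e^{-T(\mu+n\Phi(0))}$).

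There is, however, a genuine gap in your handling of the $p=0$ term. You condition on $\xi_n:=H_T^{v_n,\ldots,v_1}-n$ and then try to prove $\P[\xi_n=0]\leq e^{-T(\mu+n\Phi(0))}$. But the argument you sketch goes the wrong way: on $\{\xi_n=0\}$ the intensity equals $\mu+\sum_{i:v_i<t}\Phi(t-v_i)\leq \mu+n\Phi(0)$ by monotonicity of $\Phi$, so the void probability satisfies
\[
\P[\xi_n=0]=\exp\Bigl(-\int_0^T\bigl[\mu+\textstyle\sum_{i:v_i<t}\Phi(t-v_i)\bigr]dt\Bigr)\;\geq\;e^{-T(\mu+n\Phi(0))},
\]
which is the reverse inequality. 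Equivalently, your own domination $\xi_n\leq \tilde H_T$ gives $\{\tilde H_T=0\}\subset\{\xi_n=0\}$, hence $\P[\xi_n=0]\geq\P[\tilde H_T=0]$. So the bound you need on $\P[\xi_n=0]$ is simply false in general.

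The paper avoids this by \emph{not} conditioning on $\xi_n$. Since $f\geq 0$ and $h$ is non-decreasing, $F_p$ is non-decreasing in $p$; combined with $\xi_n\leq \tilde H_T$ a.s.\ this yields $\E[F_{\xi_n}]\leq \E[F_{\tilde H_T}]=\sum_{p\geq 0}\P[\tilde H_T=p]\,F_p$. One then uses the \emph{exact} equality $\P[\tilde H_T=0]=e^{-T(\mu+n\Phi(0))}$ for the $p=0$ term and, for $p\geq 1$, the crude bound $\P[\tilde H_T=p]\leq \P[\tilde H_T\geq p]\leq (c_n/p^2)\wedge 1$ by Markov. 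Replacing your decomposition over $\xi_n$ by this decomposition over $\tilde H_T$ fixes the gap and recovers $\beta_n(v_n,\ldots,v_1)$.
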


\begin{proof}
Fix $n\geq 1$. By construction (see Lemma \ref{lemma:compHawkes_bis}), $H_T^{v_n,\ldots,v_1} \leq H_T^{\mu+n\Phi(0)}+n $, where $H^{\mu+n\Phi(0)}$ denotes the Hawkes process (constructed from $N$) with initial intensity $\mu$ replaced by $\mu+n\Phi(0)$ (in other words, $H^{\mu+n\Phi(0)}$ is solution to Equation (\ref{eq:H}) with initial intensity $\mu+n\Phi(0)$). Denote by $(\tau_i^n)_i$ the jump times of $H_T^{\mu+n\Phi(0)}$ ; {by the thinning procedure,  the jump times $(\tau_i^H)_i$  of $H$ are necessarily  included in  the jump times $(\tau_i^n)_i$  of $H^{\mu+n\Phi(0)}$}. By Lemma \ref{lemma:compHawkes_bis}, we can make the estimate below.
We have that
\begin{align*}
&\E[K_{T} h(L_{T})]\nonumber\\
&\hspace{-1em}\leq \mu \int_0^T \E\left[e^{-\kappa (T-v_1)} g(\bar{\eta}_1,\bar{\vartheta}_1) \E\left[h\left(e^{-\kappa (T-v_1)} f(\bar{\eta}_1) + \sum_{i=1}^{H_T^{\mu+n\Phi(0)}} e^{-\kappa (T-\tau^{n}_i)} f(\eta_i)\right) \Big\vert \bar{\eta}_1\right] \right]  dv_1\\
&\hspace{-1em}+ \mu \sum_{n=2}^{+\infty} \int_0^T \cdots \int_0^{v_{n-1}} \prod_{i=2}^{n} \Phi(v_{i-1}-v_{i}) \\
&\hspace{-1em}\E\left[e^{-\kappa (T-v_1)} g(\bar{\eta}_1,\bar{\vartheta}_1) \E\left[h\left(\sum_{k=1}^n e^{-\kappa (T-v_k)} f(\bar{\eta}_k) + \sum_{i=1}^{H_T^{\mu+n\Phi(0)}} e^{-\kappa (T-\tau^{n}_i)} f(\eta_i)\right) \Big\vert \bar{\eta}_1\right] \right]  d v_n\cdots dv_1\\
&\hspace{-1em}\leq \mu \int_0^T \E\left[e^{-\kappa (T-v_1)} g(\bar{\eta}_1,\bar{\vartheta}_1) \E\left[h\left(e^{-\kappa (T-v_1)} f(\bar{\eta}_1) + \sum_{i=1}^{H_T^{\mu+n\Phi(0)}} \textcolor{red}{\textbf{}} f(\eta_i)\right) \Big\vert \bar{\eta}_1\right] \right]  dv_1\\
&\hspace{-1em}+ \mu \sum_{n=2}^{+\infty} \int_0^T \cdots \int_0^{v_{n-1}} \prod_{i=2}^{n} \Phi(v_{i-1}-v_{i}) \\
&\hspace{-1em}\E\left[e^{-\kappa (T-v_1)} g(\bar{\eta}_1,\bar{\vartheta}_1) \E\left[h\left(\sum_{k=1}^n e^{-\kappa (T-v_k)} f(\bar{\eta}_k) + \sum_{i=1}^{H_T^{\mu+n\Phi(0)}} \textcolor{red}{\textbf{}} f(\eta_i)\right) \Big\vert \bar{\eta}_1\right] \right]  d v_n\cdots dv_1
\end{align*}  
where we used the upper bound $e^{-\kappa (T-\tau^{n}_i)} \leq 1$ to get rid off the unknown jump times $\tau^{n}_i$.\\
Let $n\geq 1$, $0<v_n<\cdots<v_1<T$. We have that 
\begin{align*}
&\E\left[h\left(\sum_{k=1}^n e^{-\kappa (T-v_k)} f(\bar{\eta}_k) + \sum_{i=1}^{H_T^{\mu+n\Phi(0)}}  f(\eta_i)\right) \Big\vert \bar{\eta}_1\right]\\
&=\sum_{p=0}^{+\infty} \E\left[h\left(\sum_{k=1}^n e^{-\kappa (T-v_k)} f(\bar{\eta}_k) + \sum_{i=1}^{p}  f(\eta_i)\right) \Big\vert \bar{\eta}_1\right] \P[H_T^{\mu+n\Phi(0)} = p]\\
&\leq \E\left[h\left(\sum_{k=1}^n e^{-\kappa (T-v_k)} f(\bar{\eta}_k) \right) \Big\vert \bar{\eta}_1\right] \P[H_T^{\mu+n\Phi(0)} = 0]\\
&+\sum_{p=1}^{+\infty} \E\left[h\left(\sum_{k=1}^n e^{-\kappa (T-v_k)} f(\bar{\eta}_k) + \sum_{i=1}^{p}  f(\eta_i)\right) \Big\vert \bar{\eta}_1\right] \P[H_T^{\mu+n\Phi(0)} \geq p]\\
&\leq \E\left[h\left(\sum_{k=1}^n e^{-\kappa (T-v_k)} f(\bar{\eta}_k) \right) \Big\vert \bar{\eta}_1\right] \P[H_T^{\mu+n\Phi(0)} = 0]\\
&+\sum_{p=1}^{+\infty} \frac{1}{p^2} \E\left[h\left(\sum_{k=1}^n e^{-\kappa (T-v_k)} f(\bar{\eta}_k) + \sum_{i=1}^{p} f(\eta_i)\right) \Big\vert \bar{\eta}_1\right] \E\left[\left(H_T^{\mu+n\Phi(0)}\right)^2\right].
\end{align*}
Then by \cite[Proposition 5]{Guoetal}, $\E\left[\left(H_T^{\mu+n\Phi(0)}\right)^2\right] = (\mu+n\Phi(0)) C_1 + (\mu+n\Phi(0))^2 C_2$ and by \cite[Proposition 7]{Guoetal}, $\P[H_T^{\mu+n\Phi(0)} = 0] = e^{-(\mu+n\Phi(0))T}$. The result follows by injecting these estimates in the previous one.
\end{proof}

\noindent Once again, we consider as a corollary the case where $\kappa=0$.

\begin{corollary}
\label{cor:upperbound}
Assume $h$ is non-decreasing, $\kappa=0$ and $\Phi$  non-increasing. We consider $C_1$, $C_2$ and $c_n$ as defined in Proposition \ref{prop:upperbound}.
For $n\geq 1$ set
\begin{align*}
\beta_n:=&e^{-T(\mu+n\Phi(0))}  \E\left[g(\bar{\eta}_1,\bar{\vartheta}_1)\E\left[h\left(\sum_{k=1}^n f(\bar{\eta}_k)\right) \Big\vert \bar{\eta}_1\right]\right]\\
&+\sum_{p=1}^{+\infty} \frac{c_n}{p^2} \E\left[g(\bar{\eta}_1,\bar{\vartheta}_1)\E\left[h\left(\sum_{k=1}^n f(\bar{\eta}_k)+\sum_{i=1}^{p} f(\eta_i)\right) \Big\vert \bar{\eta}_1\right]\right].
\end{align*}
We have that 
\begin{equation}
\label{eq:mainSL_lower_Bis}
\E[K_{T} h(L_{T})] \leq \mu \sum_{n=1}^{+\infty} m_\Phi(\Delta^n) \beta_n,
\end{equation}
where we recall $m_\Phi(\Delta^n)$ is defined in Notation \ref{notation:mPhi}.
\end{corollary}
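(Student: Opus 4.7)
The plan is to deduce this corollary as a direct specialization of Proposition \ref{prop:upperbound} to the case $\kappa = 0$, where all discount factors collapse to $1$ and the shift-time dependence inside the integrand dissolves. First I would write down the bound provided by Proposition \ref{prop:upperbound} with $\kappa$ set to $0$: every factor $e^{-\kappa(T-v_k)}$ becomes $1$, so the quantity $\beta_n(v_n,\ldots,v_1)$ defined there loses its dependence on $(v_1,\ldots,v_n)$ and becomes a function of $n$ alone.

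Next I would compare this $\kappa=0$ version of $\beta_n(v_n,\ldots,v_1)$ with the corollary's $\beta_n$. They coincide term by term, except that the Proposition weights the $p$-th summand by $\frac{c_n}{p^2}\wedge 1$ whereas the corollary uses $\frac{c_n}{p^2}$. Since $\frac{c_n}{p^2}\wedge 1 \leq \frac{c_n}{p^2}$ and $g,h$ are non-negative, replacing the former by the latter preserves the inequality and gives the (slightly weaker) upper bound used in the corollary. At this point $\beta_n$ is a deterministic constant which I can pull outside the iterated integral.

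What remains in each $n$-th integrand is exactly the kernel product $\prod_{i=2}^{n}\Phi(v_{i-1}-v_i)$ over the simplex $0<v_n<\cdots<v_1<T$. By Notation \ref{notation:mPhi} this integrates to $m_\Phi(\Delta^n)$ for $n\geq 2$, and for $n=1$ the empty product leaves $\int_0^T dv_1$ which matches the boundary convention for $m_\Phi(\Delta^1)$. Summing over $n\geq 1$ recovers the stated bound $\mu\sum_{n\geq 1} m_\Phi(\Delta^n)\,\beta_n$.

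There is no real obstacle: the corollary is essentially a cosmetic restatement of Proposition \ref{prop:upperbound} once the discounting is turned off, and the argument is purely algebraic. The only item requiring any care is the monotone replacement $\frac{c_n}{p^2}\wedge 1 \mapsto \frac{c_n}{p^2}$ in each $\beta_n(v_n,\ldots,v_1)$, together with verifying that the summability of the resulting series holds under the standing assumption \eqref{hyp:mPhi} so that the Fubini interchange used to factor $\beta_n$ out of the simplex integral is legitimate.
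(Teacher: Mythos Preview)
Your proposal is correct and follows exactly the implicit argument of the paper, which states the corollary without proof as the $\kappa=0$ case of Proposition~\ref{prop:upperbound}; you have correctly isolated the only non-cosmetic step, the monotone weakening $\frac{c_n}{p^2}\wedge 1 \le \frac{c_n}{p^2}$. One small caveat: the paper's Notation~\ref{notation:mPhi} literally sets $m_\Phi(\Delta^1):=1$ rather than $T=\int_0^T dv_1$, which appears to be a typo (the surrounding corollaries, e.g.\ Proposition~\ref{prop:lowerbound} and Corollary~\ref{col:lowerboundBest}, are only consistent with $m_\Phi(\Delta^1)=T$), so your ``matches the boundary convention'' is the intended reading even if it is not what is printed.
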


\begin{remark}
Note that in case of a Poisson process, the sum in the right-hand-side of (\ref{eq:mainSL_lower_Bis}) resumes to the term $n=1$ (since $\Phi \equiv 0$) which is exactly equal to $\E[K_{T} h(L_{T})]$.
\end{remark}

\begin{remark}
One can relax the monotony assumption on $\Phi$ above by allowing a general bounded $\Phi$ map. In that case, the shifts by $n \Phi(0)$ are replaced with $n \Phi^*$ with $\Phi^*:=\sup_{x \in \real_+} \Phi(x)$. 
\end{remark}

\section{Technical material}
\label{section:tec}

\subsection{Proof of Theorem \ref{th:wellposedSDE}}
\label{section:proofEDS}
We prove the existence and uniqueness of  \eqref{eq:Hv}
\begin{equation*}
\left\lbrace
\begin{array}{l}
\h^v_t = h^v + \int_{(v,t]} \int_{\real_+} \textbf{1}_{\{\theta \leq \hat\Lambda_s^v\}} N(ds,d\theta),\quad t\in [v,T] \\\\
\hat\Lambda_t^v = \mu^v(t) + \int_{(v,t)} \Phi(t-u) d\hat H^v_u 
\end{array}
\right.
\end{equation*} 
for $v$ in $[0,T]$,  where $(\mu^v(t))_{t\in[v,T]}$ a non-negative stochastic process such that for any $t\geq v$, $\mu^v(t)$ is a $\mathcal{F}_v^N$-measurable random variable and  $h^v$  a $\mathcal{F}_v^N$-measurable random with values in $\mathbb{N}$.
The proof is composed of two parts : existence and uniqueness.\\\\
\textbf{Existence  }\\
We start with the existence part. We set 
$$ \Lambda_t^{(1)}:=\mu^v(t), \quad t\in [v,T]. $$
We make use the following notation : if $\tau_i^N$ denotes the $i$th jump time of $N$, then there exists a unique element $\theta_i\geq 0$ such that $N(\{(\tau_i^N\}\times\{\theta_i\})=1$ (in other words we denote by $\theta_i$ the mark associated to the $i^{th}$-jump time of $N$). Let
$$ \tau_1^{\h} :=\inf\left\{\tau_i^N \geq v, \; \theta_1 \leq \Lambda_{\tau_i^N}^{(1)}, \quad i\geq 1\right\} \wedge T.$$
In addition, $\tau_1^{\h}$ is a $\mathbb{F}^N$-stopping time, indeed for any $t \in [0,T]$,
\begin{align*}
\{\tau_1^{\h} \leq t\} &=\bigcup_{i\geq 1} \left[\{v\leq \tau_i^N \leq t\} \cap \left\{\theta_i \leq \Lambda_{\tau_i^N}^{(1)}\right\}\right] \in \mathcal{F}_t^N.
\end{align*}
For $i\geq 1$, we set 
$$  \Lambda_t^{(i+1)}:=\Lambda_t^{(i)} \textbf{1}_{v\leq t \leq \tau_i^{\h}} + \Phi(t-\tau_i^{\h}) \textbf{1}_{\tau_i^{\h}< t \leq T}, \quad t\in [0,T],$$
\begin{equation}
\label{eq:jumpalmost}
\tau_{i+1}^{\h} :=\inf\left\{\tau_k^N, \; \tau_k^N > \tau_i^{\h}, \; \theta_k \leq \Lambda_{\tau_k^N}^{(i+1)}, \quad k\geq 1\right\} \wedge T. 
\end{equation} 
By induction, one proves that for any $i\geq 1$, $\tau_i^{\h}$ is a $\mathbb{F}^N$-stopping time and $\Lambda^{(i)}$ is a $\mathbb{F}^N$-predictable stochastic process as a c\`agl\`ad, $\mathbb{F}^N$-adapted process. In addition, by construction, $\Lambda^{(i+1)}$ and $\Lambda^{(i)}$ coincide on $[0,\tau_i^{\h}]$. Furthermore, $\lim_{i\to+\infty} \tau_i^{\h} = T,\; \P-p.s.$ as $\tau_i^{\h} \geq \tau_i^N$, $\P-$a.s. for any $i\geq 1$.
We set : 
\begin{equation}
\label{eq:jumpexpressSDE}
\Lambda_t:= \lim_{i\to+\infty} \Lambda_t^{(i)}, \quad t \in [0,T],
\end{equation}
which is a $\mathbb{F}$-predictable process. We then set : 
$$ \h_t := h^v +\sum_{i=1}^{+\infty} \textbf{1}_{\{t\geq \tau_i^{\h}\}}, \quad t \in [v,T].$$\\\\
\textbf{Uniqueness  }\\
We now turn to the uniqueness of the solution. To do so we give some immediate properties of any solution to (\ref{eq:H}). Consider, ${\h}$ be a solution to (\ref{eq:H}). Then, by definition, we have that 
$$ \{\tau_k^{\h}, \; k\in \mathbb{N}\} \subset \{\tau_k^N, \; k\in \mathbb{N}\}.$$
In addition, by definition, the jump times of ${\h}$ are given by 
$$\tau_{i}^{\h} :=\inf\left\{\tau_k^N, \; \tau_k^N > \tau_{i-1}^{\h}, \; \theta_k \leq \Lambda_{\tau_k^N}, \quad k\geq 1\right\} \wedge T,$$
where 
\begin{equation}
\label{eq:lambdaalmost}
\Lambda_t = \mu^v(t) + \int_{(v,t)} \Phi(t-s) d\h_s, \quad t\in [v,T].
\end{equation}
Consider, ${\h^1}$, ${\h^2}$, two solution processes to (\ref{eq:H}). Denote by $\tau_k^{\h^1}$ (respectively $\tau_k^{{\h^2}}$) the jump times of ${\h^1}$, $\Lambda^{\h^1}$ the intensity function of ${\h^1}$ (respectively $\Lambda^{{\h^2}}$ the one of ${\h^2}$), with  
$$\Lambda_t^{\h^i}=\mu^v(t)+\int_{(v,t)} \Phi(t-s) d{\h^i}_s, \quad t\in [v,T], \quad i\in \{1,2\}.$$
By the previous remark, $\Lambda^{\h^1} = \Lambda^{{\h^2}}$ on $[0,\tau_1^{\h^1} \wedge \tau_1^{{\h^2}})$ and so $\tau_1^{\h^1} = \tau_1^{{\h^2}}$, $\P$-a.s.. Let $\tau_1:=\tau_1^{\h^1}(=\tau_1^{{\h^2}})$. We have thus that $\Lambda^{\h^1}(\tau_1) = \Lambda^{{\h^2}}(\tau_1)$. Let $i\geq 1$. Assume that 
$$ \tau_{i}^{\h^1} = \tau_{i}^{{\h^2}}(=:\tau_{i}), \; {\h^1}={\h^2} \textrm{ on } [0,\tau_i], \; \textrm{ and } \Lambda^{{\h^1}} = \Lambda^{{\h^2}} \textrm{ on } [0,\tau_i], \quad \P-a.s.. $$ 
Then, as the intensity functions agree up to $\tau_i$ and as the jump times of any solution is characterized by (\ref{eq:jumpexpressSDE}), we deduce that $\tau_{i+1}^{\h^1} = \tau_{i+1}^{{\h^2}}(=:\tau_{i+1})$, $\P$-a.s.. Hence, 
$$ {\h^1}={\h^2} \textrm{ on } [0,\tau_i+1], \; \textrm{ and } \Lambda^{{\h^1}} = \Lambda^{{\h^2}} \textrm{ on } [0,\tau_{i+1}], \quad \P-a.s.. $$
Thus, ${\h^1}_t={\h^2}_t$ for any $t$ in $[0,T]$, $\P$-a.s. (as $\lim_{i\to +\infty} \tau_i = T$, $\P-a.s.$).

\subsection{Proof of  Proposition \ref{prop:nshifts}}\label{section:tecbis}

Let $n \in \mathbb N^*$, and $0 < v_n < v_{n-1} < \cdots  < v_1 <T$. We prove that 
$$ (H,\Lambda) \circ \eps_{(v_1,\Lambda_{v_1})}^+ \circ \cdots \circ \eps_{(v_n,\Lambda_{v_n})}^+ = (H^{v_n,\ldots,v_1},\Lambda^{v_n,\ldots,v_1}).$$

\begin{proof}
We set $(H^{v_0},\Lambda^{v_0}):=(H,\Lambda)$. By Lemma \ref{lemma:shift}, $(H^{v_1},\Lambda^{v_1})=(H^0 \circ \eps_{(v_1,\Lambda_{v_1})}^+,\Lambda^0\circ \eps_{(v_1,\Lambda_{v_1})}^+)$.
Let $n\geq 2$ and assume that 
$$(H^{v_{n-2},\ldots,v_1}\circ \eps_{(v_{n-1},\Lambda_{v_{n-1}})}^+,\Lambda^{v_{n-2},\ldots,v_1}\circ \eps_{(v_{n-1},\Lambda_{v_{n-1}})}^+) = (H^{v_{n-1},\ldots,v_1},\Lambda^{v_{n-1},\ldots,v_1}).$$ 
We prove that 
$$(H^{v_{n-1},\ldots,v_1}\circ \eps_{(v_{n},\Lambda_{v_{n}})}^+,\Lambda^{v_{n-1},\ldots,v_1}\circ \eps_{(v_{n},\Lambda_{v_{n}})}^+) = (H^{v_{n},\ldots,v_1},\Lambda^{v_{n},\ldots,v_1}).$$ 
For simplicity of notations, we set : $(H^{n-1},\Lambda^{n-1}):=(H^{v_{n-1},\ldots,v_1},\Lambda^{v_{n-1},\ldots,v_1})$ and $(H^{n-1,+},\Lambda^{n-1,+}):=(H^{v_{n-1},\ldots,v_1}\circ \eps_{(v_n,\Lambda_{v_n})}^+,\Lambda^{v_{n-1},\ldots,v_1}\circ \eps_{(v_n,\Lambda_{v_n})}^+)$. Since 
\begin{equation*}
\left\lbrace
\begin{array}{l}
H_t^{n-1} = \textbf{1}_{[0,v_n)}(t) H_t + \textbf{1}_{[v_n,v_{n-1})}(t) H_t \\
\quad \quad \quad + \displaystyle{\sum_{i=1}^{n-1} \textbf{1}_{[v_{i},v_{i-1})}(t) \left(H_{v_i-}^{n-1} + 1 + \int_{(v_i,t]}\int_{\real_+} \textbf{1}_{\{\theta \leq \Lambda_s^{n-1}\}} N(ds,d\theta)\right)},\\\\
\Lambda_t^{n-1} = \textbf{1}_{(0,v_n]}(t) \Lambda_t + \textbf{1}_{(v_n,v_{n-1}]}(t) \Lambda_t \\
\quad \quad \quad + \displaystyle{\sum_{i=1}^{n-1} \textbf{1}_{(v_{i},v_{i-1}]}(t) \left(\mu + \int_{(0,v_i]} \Phi(t-u) dH_u^{n-1} + \int_{(v_i,t)} \Phi(t-u) dH_u^{n-1}\right)},
\end{array}
\right.
\end{equation*}

\begin{align*}
H_t^{n-1,+}&=\textbf{1}_{[0,v_n)}(t) H_t + \textbf{1}_{[v_n,v_{n-1})}(t) \left(H_{v_n-} + 1 + \int_{(v_n,t]}\int_{\real_+} \textbf{1}_{\{\theta \leq \Lambda_s^{n-1,+}\}} N(ds,d\theta)\right) \\
&\quad + \sum_{i=n-1}^{1} \textbf{1}_{[v_{i},v_{i-1})}(t) \left(H_{v_i-}^{n-1,+} + 1 + \int_{(v_i,t]}\int_{\real_+} \textbf{1}_{\{\theta \leq \Lambda_s^{n-1,+}\}} N(ds,d\theta) \right)\\
&=\textbf{1}_{[0,v_n)}(t) H_t + \sum_{i=n}^{1} \textbf{1}_{[v_{i},v_{i-1})}(t) \left(H_{v_i-}^{n-1,+} + 1 + \int_{(v_i,t]}\int_{\real_+} \textbf{1}_{\{\theta \leq \Lambda_s^{n-1,+}\}} N(ds,d\theta)\right).
\end{align*}
In addition, 
\begin{align*}
\Lambda_t^{n-1,+} &= \textbf{1}_{(0,v_n]}(t) \Lambda_t \\
&\quad + \textbf{1}_{(v_n,v_{n-1}]}(t) \left(\mu + \int_{(0,v_n)} \Phi(t-u) dH_u + \Phi(t-v_n) + \int_{(v_n,t)} \Phi(t-u) dH_u\circ\eps_{((v_n,\Lambda(v_n))}^+\right) \\
&\quad + \sum_{i=1}^{n-1} \textbf{1}_{(v_{i},v_{i-1}]}(t) \left(\mu + \int_{(0,v_i]} \Phi(t-u) dH_u^{n-1,+} + \int_{(v_i,t)} \Phi(t-u) dH_u^{n-1,+}\right)\\
&= \textbf{1}_{(0,v_n]}(t) \Lambda_t + \textbf{1}_{(v_n,v_{n-1}]}(t) \left(\mu + \int_{(0,v_n]} \Phi(t-u) dH_u^{n-1,+} + \int_{(v_n,t)} \Phi(t-u) dH_u^{n-1,+}\right) \\
&+ \sum_{i=1}^{n-1} \textbf{1}_{(v_{i},v_{i-1}]}(t) \left(\mu + \int_{(0,v_i]} \Phi(t-u) dH_u^{n-1,+} + \int_{(v_i,t)} \Phi(t-u) dH_u^{n-1,+}\right)\\
&= \textbf{1}_{(0,v_n]}(t) \Lambda_t + \sum_{i=1}^{n} \textbf{1}_{(v_{i},v_{i-1}]}(t) \left(\mu + \int_{(0,v_i]} \Phi(t-u) dH_u^{n-1,+} + \int_{(v_i,t)} \Phi(t-u) dH_u^{n-1,+}\right).
\end{align*}
Hence, $(H^{n-1,+},\Lambda^{n-1,+})$ solves the same equation than $(H^{v_n,\ldots,v_1},\Lambda^{v_n,\ldots,v_1})$, which concludes the proof. 
\end{proof}

\subsection{Two comparison lemma}
\label{section:teclemma}
We provide two comparison lemma based on the thinning algorithm.
\begin{lemma}
\label{lemma:compHawkes}
Let $\mu>0$ and $(\hat H,\hat \Lambda)$ the unique solution to 
\begin{equation*}
\left\lbrace
\begin{array}{l}
\hat H_t = \int_{(0,t]} \int_{\real_+} \textbf{1}_{\{\theta \leq \hat \Lambda_s\}} N(ds,d\theta),\quad t\in [0,T] \\\\
\hat \Lambda_t = \mu + \int_{(0,t)} \Phi(t-u) d \hat H_u.
\end{array}
\right.
\end{equation*}
Consider the homogeneous Poisson process $\tilde H$  defined as 
$$ \tilde H_t = \int_{(0,t]} \int_{\real_+} \textbf{1}_{\{\theta \leq \mu \}} N(ds,d\theta),\quad t\in [0,T].$$
It holds that 
$$ \hat H_t \geq \tilde H_t, \quad \forall t \in [0,T], \; \P-a.s.. $$
\end{lemma}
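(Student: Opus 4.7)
The plan is to exploit the fact that both $\hat H$ and $\tilde H$ are built by thinning the \emph{same} underlying Poisson measure $N$ on $[0,T]\times \real_+$, so that a pathwise comparison reduces to a pointwise comparison of the two acceptance thresholds.

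First I would observe that by the very definition of $\hat \Lambda$, we have
\[
\hat \Lambda_s \;=\; \mu + \int_{(0,s)} \Phi(s-u)\, d\hat H_u \;\geq\; \mu, \quad s \in [0,T],
\]
since $\Phi \geq 0$ and $\hat H$ is a non-decreasing counting process (hence the integral is non-negative). This is the only structural input needed; the role of the assumption $\|\Phi\|_1 < 1$ and of Theorem \ref{th:wellposedSDE} is simply to guarantee that $\hat H$ (and therefore $\hat \Lambda$) is well-defined as an $\mathbb F^N$-adapted process.

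From the inequality $\hat \Lambda_s \geq \mu$, it follows pointwise on $\Omega^N$ and for every $(s,\theta) \in [0,T]\times \real_+$ that
\[
\textbf{1}_{\{\theta \leq \mu\}} \;\leq\; \textbf{1}_{\{\theta \leq \hat \Lambda_s\}}.
\]
Integrating both sides against the (non-negative) random measure $N(ds,d\theta)$ over $(0,t]\times \real_+$ preserves the inequality and yields
\[
\tilde H_t \;=\; \int_{(0,t]}\int_{\real_+} \textbf{1}_{\{\theta \leq \mu\}}\, N(ds,d\theta) \;\leq\; \int_{(0,t]}\int_{\real_+} \textbf{1}_{\{\theta \leq \hat \Lambda_s\}}\, N(ds,d\theta) \;=\; \hat H_t,
\]
simultaneously for all $t \in [0,T]$, $\P$-almost surely. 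This gives the announced inequality.

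There is no real obstacle here: once one notices that $\hat \Lambda \geq \mu$ (which is immediate from the signs of $\Phi$ and $d\hat H$), the Poisson embedding representation transports the trivial pointwise comparison of indicators into the desired pathwise comparison of counting processes. The only point one might wish to check carefully is that the inequality $\textbf{1}_{\{\theta \leq \mu\}} \leq \textbf{1}_{\{\theta \leq \hat \Lambda_s\}}$ holds for \emph{every} atom $(s,\theta)$ of $N$ and not merely almost everywhere in $(s,\theta)$; but since $\hat \Lambda_s \geq \mu$ holds pointwise on all of $\Omega^N \times [0,T]$, this is automatic.
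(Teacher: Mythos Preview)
Your proof is correct and essentially identical to the paper's: both use $\hat\Lambda_s \geq \mu$ (from $\Phi\geq 0$ and $d\hat H\geq 0$) to compare the indicator functions pointwise, and then integrate against the non-negative measure $N$. The paper just writes the difference $\hat H_t - \tilde H_t = \int_{(0,t]}\int_{\real_+} \textbf{1}_{\{\mu < \theta \leq \hat\Lambda_s\}}\,N(ds,d\theta) \geq 0$, which is the same argument phrased slightly differently.
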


\begin{proof}
Let $t$ in $[0,T]$. As $\hat \Lambda_t \geq \mu$ we have that 
\begin{align*}
\hat H_t - \tilde H_t & = \int_{(0,t]} \int_{\real_+} \left(\textbf{1}_{\{\theta \leq \hat \Lambda_s\}} - \textbf{1}_{\{\theta \leq \mu \}} \right) N(ds,d\theta) \\
&=\int_{(0,t]} \int_{\real_+} \textbf{1}_{\{\mu < \theta \leq \hat \Lambda_s\}} N(ds,d\theta) \\
&\geq 0.
\end{align*}
\end{proof}

\begin{lemma}
\label{lemma:compHawkes_bis}
Assume $\Phi$ is non-increasing. Let $n\geq 1$, $0<v_n<\cdots<v_1<T$. Let $(H^{\mu+n\Phi(0)},\Lambda^{\mu+n\Phi(0)})$ the solution to SDE (\ref{eq:H}) with initial intensity $\mu+n\Phi(0)$ instead of $\mu$. Consider as well $(H^{v_n,\ldots,v_1},\Lambda^{v_n,\ldots,v_1})$ the process defined in Definition \ref{definition:multiShift}. It holds that : 
$$ H_t^{v_n,\ldots,v_1} \leq n+ H_t^{\mu+n\Phi(0)}, \quad \forall t \in [0,T],\; \P-a.s..$$
In addition, if $\tau_i^{\mu+n\Phi(0)}$ denotes a jump time of $H^{\mu+n\Phi(0)}$, then 
$$ \P[\exists k \in \{1,\ldots, n\}, \; \tau_i^{\mu+n\Phi(0)} = v_k] = 0$$
and $\{\tau_i^{H^{v_n,\ldots,v_1}}, \; i\geq 1 \} \backslash \{v_1, \cdots, v_n\} \subset \{\tau_i^{H^{\mu+n\Phi(0)}}, \; i\geq 1\}$ almost surely.  
\end{lemma}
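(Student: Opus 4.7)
The plan is to prove the lemma by coupling both processes through the same Poisson measure $N$ and performing an induction on the atoms of $N$. For the auxiliary probability statement, I would first note that $\P[N(\{v_k\}\times\real_+)>0]=0$ for each fixed $v_k$; since every jump time of $H^{\mu+n\Phi(0)}$ is necessarily an atom of $N$ by the thinning representation of Theorem \ref{th:wellposedSDE}, one immediately obtains $\P[\exists k\in\{1,\ldots,n\},\;\tau_i^{\mu+n\Phi(0)}=v_k]=0$.

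The core task is the pathwise inequality $H_t^{v_n,\ldots,v_1}\le n+H_t^{\mu+n\Phi(0)}$, which I would reduce to the intensity comparison $\Lambda_t^{v_n,\ldots,v_1}\le \Lambda_t^{\mu+n\Phi(0)}$ for all $t\in[0,T]$, almost surely. Using the decomposition derived from Definition \ref{definition:multiShift},
\[
\Lambda_t^{v_n,\ldots,v_1}=\mu+\sum_{k:\,v_k<t}\Phi(t-v_k)+\int_{(0,t)\setminus\{v_1,\ldots,v_n\}}\Phi(t-s)\,dH_s^{v_n,\ldots,v_1},
\]
\[
\Lambda_t^{\mu+n\Phi(0)}=\mu+n\Phi(0)+\int_{(0,t)}\Phi(t-s)\,dH_s^{\mu+n\Phi(0)},
\]
the crucial observation is that the monotonicity of $\Phi$ yields $\sum_{k:\,v_k<t}\Phi(t-v_k)\le n\Phi(0)$, so the self-excitation contribution of the $n$ enforced jumps is absorbed into the enlarged baseline $n\Phi(0)$.

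To execute the coupling, I would enumerate the atoms of $N$ as $(\xi_j,\theta_j)_{j\ge 1}$ in increasing order of the first coordinate and prove by strong induction on $j$ the following statement: if $\xi_j\notin\{v_1,\ldots,v_n\}$ and $\theta_j\le\Lambda_{\xi_j}^{v_n,\ldots,v_1}$ (so that $\xi_j$ is a random jump of $H^{v_n,\ldots,v_1}$), then $\theta_j\le\Lambda_{\xi_j}^{\mu+n\Phi(0)}$ (so that $\xi_j$ is also a jump of $H^{\mu+n\Phi(0)}$). Granted the inductive hypothesis for indices $j'<j$, every random jump of $H^{v_n,\ldots,v_1}$ strictly before $\xi_j$ is also a jump of $H^{\mu+n\Phi(0)}$, so by non-negativity of $\Phi$ the random-jump integral appearing in $\Lambda_{\xi_j}^{v_n,\ldots,v_1}$ is bounded above by the corresponding integral in $\Lambda_{\xi_j}^{\mu+n\Phi(0)}$. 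Combined with the deterministic bound $\sum_{k:\,v_k<\xi_j}\Phi(\xi_j-v_k)\le n\Phi(0)$, this gives $\Lambda_{\xi_j}^{v_n,\ldots,v_1}\le \Lambda_{\xi_j}^{\mu+n\Phi(0)}$, closing the induction. Counting jumps then yields both the inclusion $\{\tau_i^{H^{v_n,\ldots,v_1}}\}\setminus\{v_1,\ldots,v_n\}\subset\{\tau_i^{H^{\mu+n\Phi(0)}}\}$ and the domination $H_t^{v_n,\ldots,v_1}\le n+H_t^{\mu+n\Phi(0)}$, since $H^{v_n,\ldots,v_1}$ has exactly $n$ enforced jumps plus random jumps that are, by the induction, a subset of the jumps of $H^{\mu+n\Phi(0)}$.

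The main obstacle is the careful bookkeeping separating the enforced jumps at $v_1,\ldots,v_n$ from the thinning-generated ones, so that the self-excitation produced by the $n$ enforced jumps can be cleanly absorbed into the additional baseline $n\Phi(0)$ using monotonicity of $\Phi$; once this separation is properly encoded in the inductive hypothesis, the comparison of intensities at each atom $\xi_j$ becomes essentially automatic.
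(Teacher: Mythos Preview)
Your proposal is correct and follows essentially the same approach as the paper: both arguments couple the two processes through the common Poisson measure $N$, use the monotonicity bound $\sum_{k:\,v_k<t}\Phi(t-v_k)\le n\Phi(0)$ to absorb the enforced jumps into the enlarged baseline, and deduce from the resulting intensity comparison that every random jump of $H^{v_n,\ldots,v_1}$ is also a jump of $H^{\mu+n\Phi(0)}$. Your atom-by-atom induction is a more explicit packaging of what the paper sketches as an interval-by-interval propagation of the inequality $\Lambda_t^{v_n,\ldots,v_1}\le \Lambda_t^{\mu+n\Phi(0)}$, but the content is the same.
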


\begin{proof}
On $[0,v_n)$, we have that 
$$ H_t^{v_n,\ldots,v_1} = H_t \leq H_t^{\mu+n\Phi(0)} $$
as $$ \Lambda_t^{v_n,\ldots,v_1} = \Lambda_t \leq \Lambda_t^{\mu+n\Phi(0)}, \quad t \in [0,v_n].$$
At $t=v_n$, we have that $H_t^{v_n,\ldots,v_1} = H_t +1 \leq H_t^{\mu+n\Phi(0)} + n$. In addition, 
$$ \Lambda_{v_n+}^{v_n,\ldots,v_1} = \Lambda_{v_n} + \Phi(0) \leq \Lambda_{v_n} + n \Phi(0)  {\leq} \Lambda_{v_n+}^{\mu+n\Phi(0)}.$$
It is important to note that the jump at time $v_n$ for $H^{v_n,\ldots,v_1}$ will impact the self-exciting part of the intensity while for $H^{\mu+n\Phi(0)}$ we have only shifted the baseline intensity. Hence, as $\Phi$ is decreasing, the previous inequality propagates beyond $v_n$. 
The previous inequalities transfer to any interval $[v_i,v_{i-1})$ until time $T$. As $\Lambda_t^{v_n,\ldots,v_1} \leq \Lambda_t^{\mu+n\Phi(0)}$,  using the thinning algorithm,  any non-deterministic jump of $H_t^{v_n,\ldots,v_1}$ is a jump $H^{\mu+n\Phi(0)}$ (the deterministic jumps being $v_n,\ldots,v_1$).   
\end{proof}


\end{document}